\theoremstyle{plain}
\numberwithin{equation}{section}
\newtheorem{theorem}{Theorem}[section]
\newtheorem{lemma}{Lemma}[section]
\newtheorem{proposition}{Proposition}[section]
\theoremstyle{remark}
\definecolor{brown}{rgb}{0.5,0,0}
\definecolor{backgroundcolor}{rgb}{0.98, 0.92, 0.73}
\def\N{\mathbb N}
\def\R{\mathbf R}
\def\S{\mathbb S}
\begin{document}
\allowdisplaybreaks

\setpagewiselinenumbers
\setlength\linenumbersep{100pt}

\title[Positive entire solutions of $\Delta^2 u+u^{-q}=0$ in $\R^3$ ]
{A necessary and sufficient condition for radial property of positive entire solutions of $\Delta^2 u+u^{-q}=0$ in $\R^3$}

\begin{abstract}
In this article, we are concerned with the following geometric equation 
\begin{equation}\label{MainEq}
\Delta^2 u = -u^{-q} \qquad \text{in } \R^3
\end{equation}
for $q>0$.
Recently,  Guo, Wei and Zhou  have established the relationship between the radial symmetry and the exact growth rate at infinity of a positive entire solution of  that equation as $1<q<3$. The aim of this paper is to obtain the similar result in the case $q>3$ thanks to the method of moving plane. 
\end{abstract}

\date{\bf \today \; at \, \currenttime}

\subjclass[2010]{34A34, 35A15, 35B06, 35B40, 35C20}

\keywords{Biharmonic equation, radial solution, method of moving plane}

\author[N. T. Tài ]{Nguyễn Tiến Tài}
\address[N. T.  Tài ]{Laboratoire Analyse G\'eom\'etrie et Applications, Universit\'e Sorbone Paris Nord,  93430 - Villetaneuse, France}
\email{\href{mailto: N.T. Tài <tientai.nguyen@math.univ-paris13.fr>}{tientai.nguyen@math.univ-paris13.fr}}

\maketitle

\section{Introduction}

In this article, we are interested in studying a necessary and sufficient condition for positive entire solutions to be radially symmetric of the following geometric equation 
\begin{equation}\label{MainEq}
\Delta^2 u = -u^{-q} \qquad \text{in } \R^3,
\end{equation}
provided that $q>0$.  Eq. \eqref{MainEq} has attracted many mathematicians over years  since its root from the prescribed $Q$-curvature problem in conformal geometry. We refer  interested readers to \cite{CX09} for further  information. 

The existence of a positive entire solution of \eqref{MainEq} was first proved in \cite{CX09} that there holds $q>1$, necessarily. As $q>1$,  McKenna and Reichel   \cite{KR03} looked for the radial solutions  of Eq. \eqref{MainEq} via shooting method. 
To be precise, Eq. \eqref{MainEq} was transformed into the following initial value problem
\begin{equation}\label{OdeTransform}
\begin{cases}
\Delta^2 u = -u^{-q}, \quad r \in (0,R_{\max}(\beta)),\\
 u(0)=1, \quad u'(0)=0,\\
 \Delta u(0) =\beta>0, \quad (\Delta u)'(0)=0.
\end{cases}
\end{equation}
Here $R_{\max}(\beta)$ is the largest radius of the interval of existence of the solution. The result in \cite{KR03} asserted that there exists a unique threshold parameter $\beta^{\star}$ such that $R_{\max}(\beta)=\infty$ if $\beta \geqslant \beta^{\star}$.  The authors also claimed that $u_{\beta} > u_{\beta^{\star}}$ in $(0,\infty)$ for $\beta>\beta^{\star}$ thanks to a comparison principle stated in \cite[Lemma 3.2]{KR03}. Then, we say that  $u_{\beta^{\star}}$ is the (unique) minimal  entire radial solution of \eqref{MainEq} and $ (u_{\beta})_{\beta>\beta^{\star}}$ are a family of  non--minimal radial solutions of \eqref{MainEq}. Moreover, the asymptotic behavior of  all radially symmetric solutions of \eqref{MainEq} was classified in \cite{DFG10,Gue12,DN17}. A complete picture of radial entire solutions of Eq. \eqref{MainEq} was demonstrated in \cite[Table 1.1]{DN17}. To be precise, 
\begin{equation}\label{RateMinimal}
u_{\beta^{\star}}(r) = 
\begin{cases}
O( r^{\frac4{1+q}}) \quad&\text{if } 1<q<3,\\
 O(r\log r )\quad&\text{if } q=3,\\
O( r) \quad&\text{if } q>3,
\end{cases}
 \end{equation}
 and 
 \begin{equation}\label{RateNonMinimal}
 u_{\beta}(r) =O(r^2) \quad\text{if } q>1 \text{ and } \beta >\beta^{\star}.
 \end{equation}

Having \eqref{RateMinimal} and \eqref{RateNonMinimal} at hand,  Guo, Wei and Zhou study further  the radial property  of singular positive entire solutions of \eqref{MainEq} in \cite{GWZ18}. The authors show the necessary and sufficient condition to claim that a positive entire solution $u(x)$ of \eqref{MainEq} as $1<q<3$ which grows like a  minimal  radial entire solution of \eqref{MainEq} at infinity is actually a minimal radial entire solution of \eqref{MainEq}.
And that goal was also achieved for a positive regular entire solution $u$ of \eqref{MainEq} which has  the asymptotic behavior at infinity as that of a non--minimal entire radial solution as $q>1$.

In view of Guo-Wei-Zhou's result, we answer the same question for  a positive singular entire solution $u$ of \eqref{MainEq} which admits the asymptotic behavior at infinity as that of a minimal radial entire solution as $q>3$.
  The main theorem is as follows.
 \begin{theorem}\label{MainThm}
Let $q>3$ and $u \in C^4(\R^3)$ be a positive entire solution of \eqref{MainEq}.
Then, $u$ is a minimal radial entire solution of \eqref{MainEq} if and only if  there exists  $0< \vartheta < 1$  and $L>0$ such that
\begin{equation}\label{MainCondition}
|x|^{-1}u(x) - L = o(|x|^{-\vartheta})
\end{equation}
as $|x| \to \infty$.
\end{theorem}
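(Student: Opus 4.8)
The plan is to establish the two implications separately. The necessity will be a by-product of the ODE analysis of radial solutions; the substance is the sufficiency, which I will obtain by the method of moving planes applied to an integral reformulation of \eqref{MainEq}, the start of the procedure being the main difficulty because $u$ is unbounded.

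\textbf{Necessity.} If $u$ is a (possibly translated/rescaled) minimal radial entire solution, then the sharp asymptotics underlying \eqref{RateMinimal} — see \cite{DN17} (and \cite{DFG10,Gue12}) — give, for a suitable $L>0$ and every $\varepsilon>0$,
\[
u(x)=L|x|+O\bigl(|x|^{\max\{0,\,4-q\}+\varepsilon}\bigr)\qquad (|x|\to\infty),
\]
so that $|x|^{-1}u(x)-L=O(|x|^{-\sigma})=o(|x|^{-\vartheta})$ whenever $0<\vartheta<\sigma:=\min\{1,q-3\}$, and since $q>3$ such a $\vartheta$ exists. (If one wants to start only from \eqref{RateMinimal}, the existence of $L>0$ with $u(x)/|x|\to L$ follows from the representation formula below applied to the radial solution itself.)

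\textbf{Sufficiency, preliminaries.} Assume \eqref{MainCondition}. Since $u>0$ is continuous and $|x|^{-1}u(x)\to L>0$, there is $c_1>0$ with $u(x)\ge c_1(1+|x|)$ on $\R^3$; in particular $\inf_{\R^3}u>0$ and $u^{-q}\le c_1^{-q}(1+|x|)^{-q}\in L^1(\R^3)$ because $q>3$. Using the biharmonic fundamental solution $-\tfrac1{8\pi}|x|$ of $\R^3$ one writes
\[
u(x)=P(x)+\frac1{8\pi}\int_{\R^3}\bigl(|x-y|-|y|\bigr)u^{-q}(y)\,dy,
\]
the renormalised kernel making the integral convergent for $q>3$ and of size $O(|x|)$, and $P$ biharmonic. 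As $P=O(|x|)$, $P$ is affine; comparing growth with \eqref{MainCondition} kills its linear part, so $P\equiv c_0$, while the coefficient of $|x|$ yields the identity $8\pi L=\int_{\R^3}u^{-q}$. Differentiating, $-\Delta u(x)=-\tfrac1{4\pi}\int_{\R^3}|x-y|^{-1}u^{-q}(y)\,dy\le0$, and $\Delta u(x)=\tfrac1{4\pi|x|}\int_{\R^3}u^{-q}+o(|x|^{-1})=\tfrac{2L}{|x|}+o(|x|^{-1})$. Hence, for $\psi:=u-L|x|$ one has $\Delta\psi=o(|x|^{-1})$, and — by a more careful analysis of the integral term together with interior elliptic estimates on the balls $B(x,|x|/2)$ — the quantitative bounds
\[
\psi(x)=O\bigl(|x|^{\max\{0,4-q\}+\varepsilon}\bigr),\qquad |\nabla\psi(x)|\lesssim|x|^{-\sigma},\qquad \sigma=\min\{1,q-3\},
\]
which are the technical heart of this step.

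\textbf{Sufficiency, moving planes.} Fix $e\in\mathbb{S}^{2}$ and for $\lambda\in\R$ let $T_\lambda=\{x\cdot e=\lambda\}$, $\Sigma_\lambda=\{x\cdot e>\lambda\}$, $x^\lambda$ the reflection of $x$ across $T_\lambda$, and $U_\lambda(x)=u(x^\lambda)-u(x)$. Reflecting the representation formula and symmetrising over $\Sigma_\lambda$ gives, for $x\in\Sigma_\lambda$,
\[
U_\lambda(x)=\int_{\Sigma_\lambda}K_\lambda(x,y)\,U_\lambda(y)\,dy,\qquad K_\lambda(x,y)=\frac{|x-y^\lambda|-|x-y|}{8\pi}\,c_\lambda(y),
\]
where $c_\lambda(y)=\frac{u^{-q}(y)-u^{-q}(y^\lambda)}{u(y^\lambda)-u(y)}>0$ is bounded (because $\inf u>0$) and $|x-y^\lambda|>|x-y|$ for $x,y\in\Sigma_\lambda$, so $K_\lambda>0$ on $\Sigma_\lambda\times\Sigma_\lambda$. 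One then runs the classical scheme. \emph{Start:} for $\lambda$ large, $U_\lambda\le0$ in $\Sigma_\lambda$; one splits $\Sigma_\lambda=(\Sigma_\lambda\cap B_R)\cup(\Sigma_\lambda\setminus B_R)$, using on the first piece a smallness estimate $\int_{\Sigma_\lambda\cap B_R}K_\lambda(x,y)\,dy<1$ (valid once $\lambda$ is large) to exclude a positive supremum of $U_\lambda$ there, and on the second the decomposition $U_\lambda(x)=L(|x^\lambda|-|x|)+\bigl(\psi(x^\lambda)-\psi(x)\bigr)$, in which $L(|x^\lambda|-|x|)<0$ dominates $|\psi(x^\lambda)-\psi(x)|\le 2(x\cdot e-\lambda)\sup_{[x,x^\lambda]}|\nabla\psi|$ once $R$ and $\lambda$ are large, by the gradient bound on $\psi$. \emph{Continuation:} let $\lambda_0$ be the infimum of the $\lambda$ for which $U_{\lambda'}\le0$ in $\Sigma_{\lambda'}$ for all $\lambda'\ge\lambda$. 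This infimum is finite, since $U_\lambda\le0$ for all $\lambda$ would force $u$ to be nonincreasing along every half-line in the direction $-e$, contradicting $u(x)\to\infty$. By continuity $U_{\lambda_0}\le0$ in $\Sigma_{\lambda_0}$; if $U_{\lambda_0}\not\equiv0$, the positive-kernel identity forces $U_{\lambda_0}<0$ in $\Sigma_{\lambda_0}$, and then the exterior asymptotics together with a narrow-region (small-measure) estimate near $T_{\lambda_0}$ show $U_\lambda\le0$ in $\Sigma_\lambda$ for $\lambda$ slightly below $\lambda_0$, contradicting the choice of $\lambda_0$. Hence $U_{\lambda_0}\equiv0$, i.e.\ $u$ is symmetric across $T_{\lambda_0(e)}$.

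\textbf{Conclusion and main obstacle.} Carrying this out for every $e\in\mathbb{S}^{2}$ produces hyperplanes of symmetry of $u$ with a common point $x_0$; thus $u$ depends only on $|x-x_0|$ and is a radial entire solution of \eqref{MainEq}. Since $|x|^{-1}u(x)\to L>0$, $u$ cannot be a non-minimal radial solution (for which $u$ grows strictly faster than linearly, by \eqref{RateNonMinimal} and the classification), so $u$ is a minimal radial entire solution of \eqref{MainEq}. The principal difficulty is precisely the \emph{start} of the moving plane: $u$ being unbounded, $u(x^\lambda)$ is not a priori $\le u(x)$, and controlling $\psi(x^\lambda)-\psi(x)$ uniformly on $\Sigma_\lambda\setminus B_R$ is what forces the fine expansion of $u$ and of $\nabla u$ above. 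A technically cleaner route is the Kelvin transform $v(x)=|x|\,u(x/|x|^2)$, which solves $\Delta^2v=-|x|^{q-7}v^{-q}$ in $\R^3\setminus\{0\}$ and extends continuously to the origin with $v(0)=L$ and $v(x)-L=o(|x|^{\vartheta})$: this trades the growth at infinity for a mild Hölder singularity at $0$, at the cost of the singular weight $|x|^{q-7}$ when $3<q<7$, which must be absorbed into the moving-plane estimates.
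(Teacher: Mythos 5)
Your route — integral representation via the biharmonic kernel $-\tfrac1{8\pi}|x|$ followed by moving planes in integral form — is genuinely different from the paper, which works via the Kelvin transform $v(y)=|x|^{-1}u(x)-L$, expands the oscillatory part of $v$ in spherical harmonics, proves the fine expansion $u(x)=r(L+\xi(r)+\eta(r,\theta)/r)$ (Theorem~\ref{ThmExpansion}), and then applies the method of moving planes to the second--order system \eqref{systemPDE}. Both approaches put the weight in the same place (the expansion of $u$ and $\nabla u$ at infinity), and your renormalised kernel $|x-y|-|y|$ and the identity $8\pi L=\int u^{-q}$ are a clean way to extract the leading linear profile; that part looks sound.

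The gap is in the \emph{start} of the moving plane, precisely the step you flag as delicate. You estimate
\[
|\psi(x^\lambda)-\psi(x)|\le 2(x\cdot e-\lambda)\sup_{[x,x^\lambda]}|\nabla\psi|
\]
and then hope to dominate this by $L(|x|-|x^\lambda|)=\frac{4L\lambda(x\cdot e-\lambda)}{|x|+|x^\lambda|}$ using $|\nabla\psi(z)|\lesssim |z|^{-\sigma}$. This fails: take $x=te$ with $t>2\lambda$, so that the segment $[x^\lambda,x]$ runs along the $e$-axis and passes through the origin. Then $L(|x|-|x^\lambda|)=2L\lambda$ is a \emph{constant} in $t$, while $\sup_{[x,x^\lambda]}|\nabla\psi|$ does not decay (the segment contains points arbitrarily close to $0$), so your bound on $|\psi(x^\lambda)-\psi(x)|$ grows like $t$ and eventually swamps $2L\lambda$. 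The true difference, however, is bounded: with $\psi(x)=r\xi(r)+\eta(r,\theta)$, the radial part obeys a mean value estimate at an intermediate radius $\beta\in(|x^\lambda|,|x|)$, both endpoints large, giving $O(\lambda\,\beta^{-\sigma})\to0$; and the angular part $\eta$ (which tends to $V(\theta)=\theta\cdot x^\star$) is bounded in $C^1(\S^2)$, so $\eta(|x^\lambda|,\theta^\lambda)-\eta(|x|,\theta)$ is $O(1)$ even though $\theta$ and $\theta^\lambda$ are antipodal. In other words, the chord--length gradient bound throws away exactly the structure that makes the increment bounded across the origin, which is why the paper carries out the $I$, $II$, $III$ decomposition of Lemma~\ref{LemLimitExpansionU} with radial and angular mean value arguments applied \emph{separately}, never integrating $\nabla\psi$ along a straight segment through small radii. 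Your sketch must be reworked so that the radial increment is estimated via $\xi(\beta)+\beta\xi'(\beta)$ with $\beta\ge\min(|x|,|x^\lambda|)$ and the angular increment via boundedness of $\eta$ and $\nabla_\theta\eta$; the bare inequality $|\nabla\psi|\lesssim |x|^{-\sigma}$, with $\sigma<1$ in the range $3<q<4$, is not strong enough to close the argument.
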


Inspired by the previous articles \cite{Zou95,Guo02,GW07, GHZ15, GW18}, where the authors studied that problem on equations of the form \eqref{MainEq} involving Laplacian and bi--Laplacian also, the method of moving plane is still our key ingredient  to show the necessary part of the main theorem. We first collect in Section \ref{SectPreparation} some basic properties of the eigenvalues and eigenfunctions of Laplacian and bi--Laplacian on $\S^2$ and  introduce the Kelvin transform of solution $u$, i.e. 
\begin{equation}\label{KelvinTransform}
y=\frac{x}{r^2},\quad r=|x|>0,\quad v(y) = |x|^{-1} u(x) - L.
\end{equation}
Section \ref{SectProperties} is devoted to establish an upper bound for  
\begin{equation}\label{DefineW}
W(s) : =\Big( \int_{\S^2} w^2(s,\theta) d\theta \Big)^{1/2}
\end{equation}
where $y=(s,\theta), s=|y|=r^{-1}$ and  $w(s,\theta):=v(s,\theta)-\overline v(s)$,  $\overline v$ is the spherical average of $v$ on $\S^2$, i.e 
\begin{equation}\label{EqAverageOfV}
\overline v(s) = \frac1{|\S^2|}\int_{\S^2} v(s,\theta) d\theta.
\end{equation}
Hence, by exploiting further estimates for $v$ and $\overline v$ near $s=0$ in the next section, we  deduce the asymptotic expansion at infinity of $u$ and $\Delta u$ in Theorem \ref{ThmExpansion}, which is crucial in our argument. In the final step, we transform \eqref{MainEq} into a system of  two partial differential equations as follows 
\begin{equation}\label{systemPDE}
\begin{cases}
-\Delta u &=w \quad \text{in } \R^3,\\
-\Delta w &=-u^{-q} \quad \text{in } \R^3
\end{cases}
\end{equation}
and apply the method of moving plane to the system \eqref{systemPDE} to ensure the radial property. Eventually, we conclude the main theorem by  noticing that the sufficiency follows from \cite[Theorem 1.3]{Gue12}.

\section{Preliminaries}\label{SectPreparation}

In this section, we first state here the basic properties of  the Laplace operator on $\S^2$. It is well--known from \cite{CH62} that the eigenvalues of the operator $-\Delta_{\S^2}$ are given by
\[
\lambda_k = k(k+1) \quad (k \in \N),
\]
with the multiplicity  $m_k= 2k+1$ and we will denote the corresponding eigenfunctions  by $Q_1^j,Q_2^j,\dots,Q_{m_k}^k$. Without restricting the generality, we assume that 
\[
\{Q_1^0(\theta),Q_1^1(\theta),\dots,Q_{m_1}^1(\theta),Q_1^2(\theta),\dots,Q_{m_2}^2(\theta),Q_1^3(\theta),\dots \}
\]
 is a standard normalized basis of $H^2(\S^2)$. As indicated in \cite[Lemma 2.1]{GHZ15}, the eigenvalues of $\Delta_{\S^2}^2$ are of the form $\lambda_k^2 (k\in \N)$ with the same multiplicity. Hence, we obtain 
 \[
 \int_{\S^2} |\nabla_{\theta} w|^2 d\theta \geqslant 2 \int_{\S^2} w^2 d\theta
 \]
 and
 \[
 \int_{\S^2} |\Delta_{\theta} w|^2 d\theta \geqslant 4 \int_{\S^2} w^2 d\theta
 \]
 for any function $w$ orthogonal to $Q_1^0$. Thanks to the bootstrap argument, we deduce that 
\[
\max_{\theta \in \S^2}|Q_j^k(\theta)|\leqslant D_k, \quad \max_{\theta \in \S^2} |\nabla_{\theta} Q_j^k(\theta)| \leqslant E_k
\]
for $1\leqslant j \leqslant m_k$, where 
\begin{equation}\label{DefDkEk}
D_k :=C(1+\lambda_k+\lambda_k^2+\dots+\lambda_k^{\tau_1}), \quad E_k:=C(1+\lambda_k+\lambda_k^2+\dots+\lambda_k^{\tau_2})
\end{equation}
with a positive constant $C$ independent of $k$ and $\tau_1,\tau_2 \in \N$ greater than 2.   

Recall here the Kelvin transform \eqref{KelvinTransform}, we obtain that the function $v$  satisfies
\begin{equation}\label{MainEqTransformODEv}
\partial_s^4 v + 4s^{-1} \partial_s^3 v+  2 s^{-4}\Delta_{\theta} v + 2s^{-2} \Delta_{\theta}(\partial_s^2 v)+s^{-4} \Delta_{\theta}^2 v+s^{q-7}(v+\kappa)^{-q}=0,
\end{equation}
which is a consequence of the following computation
\[
\Delta_x^2 u = \Big( \partial_r^4 +4 r^{-1} \partial_r^3 + 2r^{-4} \Delta_{\theta}+ 2r^{-2}\Delta_{\theta} \partial_r^2+ r^{-4}\Delta_{\theta}^2 \Big) u.
\]
Furthermore, there exists  two positive constants $M$ and $s^{\star}$ depending only on $u$ such that 
\begin{equation}\label{LimitVy}
\lim_{|y|\to 0} v(y)=0, \quad |\nabla^l v(y)| \leqslant \frac{M}{s^l} \quad \text{for } s=|y|\leqslant s^{\star},
\end{equation}
due to the standard elliptic theory.  Next, a direct calculation shows that $\overline v$ and $w$ respectively fulfill 
\begin{equation}\label{ODEaverageV}
\partial_s^4 \overline v(s) +4s^{-1} \partial_s^3 \overline v +s^{q-7} \overline{(v+L)^{-q}}=0,
\end{equation}
and 
\begin{equation}\label{PDEw}
\partial_s^4 w + 4s^{-1}\partial_s^3 w+ 2s^{-4} \Delta_{\theta} w + 2s^{-2} \Delta_{\theta}(\partial_s^2 w)+ s^{-4}\Delta_{\theta}^2 w - s^{-4} g(w)=0,
\end{equation}
where 
\[
\begin{split}
g(w)  &= s^{q-3} (v+L)^{-q}- s^{q-3} \overline{(v+L)^{-q}}\\
&= -qs^{q-3} \Big[ (\xi(s,\theta)+L)^{-q-1}w(s,\theta) - \overline{(\xi(s,\theta)+L)^{-q-1}w(s,\theta)} \Big]
\end{split}
\]
and  $\xi(s,\theta)$ is between $v(s,\theta)$ and $\overline v(s)$. Let denote
\[
\begin{split}
\zeta(s) &= \max_{\theta \in \S^2} |g(w)| \\
&= \max_{\theta \in \S^2} \Big| -qs^{q-3} \Big[ (\xi(s,\theta)+L)^{-q-1}w(s,\theta) - \overline{(\xi(s,\theta)+L)^{-q-1}w(s,\theta)} \Big] \Big|.
\end{split}
\]
We see that $\zeta(s)=O(s^{q-3})$ and $\xi(s,\theta) \to 0$ as $s \to 0$.

\section{An upper bound of $W(s)$ for $s$ small}\label{SectProperties}

This section is devoted to give a priori estimate of $W(s)$, introduced in \eqref{DefineW}. We prove the following proposition.
\begin{proposition}\label{PropEstimateW}
There exists a sufficiently small $s_0$ and $C >0$ independent of $s_0$ such that for $s \in(0,s_0)$ 
\begin{equation}\label{EstimateW}
W(s) \leqslant Cs.
\end{equation}
\end{proposition}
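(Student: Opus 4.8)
The plan is to derive the bound $W(s) \leqslant Cs$ by working with the PDE \eqref{PDEw} satisfied by $w$, projecting it onto the eigenspaces of $\Delta_\theta$ and $\Delta_\theta^2$ on $\S^2$, and then running an ODE comparison argument near $s = 0$. First I would expand $w(s,\theta) = \sum_{k \geqslant 1} \sum_{j=1}^{m_k} w_k^j(s) Q_j^k(\theta)$ (the sum starts at $k=1$ since $w$ is orthogonal to the constant eigenfunction $Q_1^0$ by construction). Multiplying \eqref{PDEw} by $Q_j^k$ and integrating over $\S^2$, each coefficient $w_k^j$ solves an ODE of the form
\begin{equation}\label{PlanProjODE}
\partial_s^4 w_k^j + 4 s^{-1} \partial_s^3 w_k^j - 2\lambda_k s^{-4} w_k^j - 2\lambda_k s^{-2} \partial_s^2 w_k^j + \lambda_k^2 s^{-4} w_k^j = s^{-4} g_k^j(s),
\end{equation}
where $g_k^j(s) = \int_{\S^2} g(w) Q_j^k \, d\theta$. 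From $W(s)^2 = \sum_{k,j} (w_k^j(s))^2$ and the earlier bound $\zeta(s) = O(s^{q-3})$ (together with the uniform bounds $|Q_j^k| \leqslant D_k$) I would extract a single scalar differential inequality for $W$, or more robustly an integral inequality for $\int_{\S^2} w^2\, d\theta$.

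The cleaner route, which I expect to be the actual engine of the proof, is the energy/integral-inequality approach. I would multiply \eqref{PDEw} by $w$ and integrate over $\S^2$, using the spectral gaps recorded in the preliminaries — namely $\int_{\S^2} |\nabla_\theta w|^2 \geqslant 2 \int_{\S^2} w^2$ and $\int_{\S^2} |\Delta_\theta w|^2 \geqslant 4 \int_{\S^2} w^2$ — to control the angular terms. This converts the PDE into a fourth-order ODE differential inequality for $W(s)$ (or for the energy quantity $\int_{\S^2} w^2 d\theta$), with the inhomogeneity controlled by $\zeta(s) W(s) = O(s^{q-3}) W(s)$, which is a \emph{lower-order} perturbation near $s=0$ because $q > 3$. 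The homogeneous part is an equidimensional (Euler-type) operator in $s$ whose indicial roots govern the admissible growth rates of $W$ as $s \to 0^+$; combined with the a priori control $W(s) \leqslant M s^{0} = M$ and $|\nabla^l v| \leqslant M/s^l$ from \eqref{LimitVy} (which in particular gives $W(s) \to 0$), a bootstrap on the exponent should push the decay rate of $W$ up to the sharp value $s^{1}$ predicted by the lowest relevant indicial root. The constant $C$ comes out independent of $s_0$ because all the constants in the spectral gaps and in $\zeta(s) = O(s^{q-3})$ are.

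The main obstacle will be handling the mixed term $2 s^{-2} \Delta_\theta (\partial_s^2 w)$ in \eqref{PDEw}: when one tests against $w$ and integrates by parts on $\S^2$ this produces a cross term mixing radial derivatives with angular gradients, which is not obviously of a definite sign, so the energy identity must be organized carefully (likely by also testing against $\partial_s^2 w$ or by integrating by parts in $s$ as well) to close the estimate. A secondary technical point is justifying that the series expansion of $w$ converges well enough — here the polynomial bounds $D_k, E_k$ from \eqref{DefDkEk} and the smoothness of $v$ away from the origin are what make the term-by-term manipulations rigorous. Once the fourth-order ODE differential inequality for $W$ is in hand, the conclusion $W(s) \leqslant Cs$ on $(0, s_0)$ follows from a standard comparison with the solutions $s^0, s, s^2, \ldots$ of the associated Euler equation, choosing $s_0$ small enough that the $O(s^{q-3})$ perturbation is absorbed.
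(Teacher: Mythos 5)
Your opening moves match the paper's: expand $w$ in spherical harmonics, project \eqref{PDEw} onto each eigenspace to get the mode ODEs (your \eqref{PlanProjODE} agrees with the paper's \eqref{ODEwjk} after simplification), and control the source by $\zeta(s) = O(s^{q-3})$. But at that point the paper does \emph{not} pass to an energy/integral inequality for $W$; instead it stays mode-by-mode, substitutes $t = -\ln s$, $z_j^k(t)=w_j^k(s)$, so the Euler operator becomes a constant-coefficient ODE with explicit characteristic roots $-k-2,\,-k,\,k-1,\,k+1$, and runs variation of parameters plus a Gronwall-type argument to get $|z_j^k(t)| = O(e^{-kt})$, then sums $\sum_k k\,m_k e^{-kt}$. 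This sidesteps entirely the mixed term $2s^{-2}\Delta_\theta(\partial_s^2 w)$ that you correctly flag as the main obstacle of the energy route; that obstacle never gets resolved in your proposal and I do not see how it could be, so the energy route as sketched does not close.

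More seriously, there is a genuine gap that no amount of rearranging the energy identity would fix: the hypothesis \eqref{MainCondition} is never invoked in your argument, yet it is indispensable. The whole danger is the mode $k=1$, whose characteristic roots (in $t$) are $-3,-1,0,2$. The root $0$ corresponds to a solution that is \emph{constant} in $t$, i.e.\ $w_j^1(s)\to c$ as $s\to 0$ with $c$ possibly nonzero; if that happens $W(s)$ does not even decay, let alone like $s$, and the conclusion fails. The a priori bound $W(s)\leqslant M$ from \eqref{LimitVy} cannot be ``bootstrapped past'' this indicial root because $s^0$ is an admissible homogeneous rate — there is no forcing the bootstrap to improve it. The paper kills the constant mode precisely by using \eqref{MainCondition}: it gives $|w(s,\theta)|\leqslant Cs^{\vartheta}$ near $0$, hence $\int_t^\infty |z_j^1(\tau)|\,d\tau < \infty$, which forbids the nonzero constant limit and then yields $|z_j^1(t)|=O(e^{-t})$. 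Your proposal treats $W(s)\leqslant Cs$ as coming ``for free'' from the indicial-root structure and the smallness of the source, but for $k=1$ this is false without the hypothesis; this is the missing idea.

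A smaller inaccuracy: you speak of a single Euler-type operator whose indicial roots govern $W$. The indicial roots depend on $k$, so there is no single indicial structure for the aggregated quantity $W$; the mode-by-mode analysis (followed by the summation $\sum_k km_k e^{-kt}=O(e^{-2t})$, which controls the $k\geqslant 2$ tail uniformly) is what makes the conclusion legitimate.
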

\begin{proof}
It is worth noting that $w \in H^2(\S^2) \subset L^2(\S^2)$ and $\overline w(s)= 0$. Then, we have the expansion 
\begin{equation}\label{ExpansionOfW}
w(s,\theta)= \sum_{k=1}^{\infty} \sum_{j=1}^{m_k} w_j^k(s) Q_j^k(\theta).
\end{equation}
Substituting \eqref{ExpansionOfW} into \eqref{PDEw}, we deduce that  $w_j^k(s)$ with $1 \leqslant j \leqslant m_k$ is such that 
\begin{equation}\label{ODEwjk}
\partial_s^4 w_j^k +4s^{-1}  \partial_s^3 w_j^k -2\lambda_ik s^{-2} \partial_s^2 w_j^k - (2\lambda_k -\lambda_k^2) s^{-4} w_j^k = s^{-4} g_j^k(s),
\end{equation}
where 
\[
g_j^k (s)= \int_{\S^2} g(w) Q_j^i(\theta) d\theta = \int_{\S^2} f'(\xi(s,\theta)) w(s,\theta) Q_j^k(\theta) d\theta.
\]
which is bounded by 
\[
|g_j^k(s)| \leqslant C \zeta(s)W(s) = O(s^{q-3})W(s) 
\]
for $s$ near 0. Furthermore, to prove \eqref{EstimateW}, we only need to consider the case 
\[
|g_j^k(s)|=o_s(1)|w_j^k(s)|.
\]
Indeed,  note that $g_j^k(s)$ and $w_j^k(s)$ are Fourier coefficients of $f'(\xi)w(s,\theta)$ and $w(s,\theta)$, respectively.   One obtains
\[
\| f'(\xi) w(s,\theta) \|_{L^2(\S^2)} \leqslant \zeta(s) \|w(s,\theta)\|_{L^2(\S^2)} = o_s(1) \|w\|_{L^2(\S^2)},
\]
which yields 
\[
\sum_{k=1}^{\infty} \sum_{j=1}^{m_k} |g_j^k(s)|^2 = o_s(1)  \sum_{k=1}^{\infty} \sum_{j=1}^{m_k} |w_j^k(s)|^2.
\]
Set 
\[
\begin{split}
G_s &= \{ (j,k) : k\geqslant 1, 1\leqslant j\leqslant m_k \text{ such that } |g_j^k(s)|=o_s(1) |w_j^k(s)| \},\\
B_s &= \{ (j,k) : k\geqslant 1, 1\leqslant j\leqslant m_k \text{ such that } |g_j^k(s)| \neq o_s(1) |w_j^k(s)| \}.
\end{split}
\]
Now, we show that there exists  $0<\tilde s<s^{\star}$ ($s^{\star}$ is given in  \eqref{LimitVy}) and $C>0$ independent of $j,k$ and $s$ such that for any $0<s<\tilde s$  we have 
\[
|g_j^k(s)| \geqslant C |w_j^k(s)|
\]
for $(j,k) \in B_s$. Indeed, we contradicts that there exists $c_n \to 0$ and $s_n\to 0$ as $n\to \infty$ such that 
\[
|g_{j_n}^{k_n}(s_n)| \leqslant c_n |w_{j_n}^{k_n}(s_n)|
\]
for large $n$ and $(j_n,k_n) \in B_{s_n}$. Then, 
\[
|g_{j_n}^{k_n}(s_n)| \leqslant o_{s_n}(1) |w_{j_n}^{k_n}(s_n)|
\]
for large $n$, which contradicts $(j_n,k_n) \in B_{s_n}$. Hence, for any $s\in(0,\tilde s)$,
\[
\sum_{(j,k)\in B_n} |w_j^k(s)|^2 \leqslant C^{-2} \sum_{(j,k) \in B_n} |g_j^k(s)|^2 = o_s(1) \sum_{k=1}^{\infty} \sum_{j=1}^{m_k} |w_j^k(s)|^2.
\]
Then, we  assume that  in the rest of the proof of Proposition \ref{PropEstimateW}, there holds
\begin{equation}\label{RelationGjkWjk}
|g_j^k(s)|=o_s(1)|w_j^k(s)|
\end{equation}
for any $k\geqslant 1, 1\leqslant j \leqslant m_k$ and $0<s<\tilde s$.

Let $t=-\ln s$ and $ z_j^k(t) = w_j^k(s)$. Then, \eqref{ODEwjk} becomes
\begin{equation}\label{ODEzjk}
\partial_t^4 z_j^k +2\partial_t^3 z_j^k - (1+2\lambda_k) \partial_t^2 z_j^k - 2(1+\lambda_k) \partial_t z_j^k + \lambda_k(\lambda_k-2) z_j^k = f_j^k(t),
\end{equation}
where $f_j^k(t) = g_j^k(e^{-t})$. The corresponding characteristic polynomial of \eqref{ODEzjk} is 
\[
\mu^4+2\mu^3-(1+2\lambda_k)\mu^2-2(1+\lambda_k)\mu +\lambda_k(\lambda_k-2)=0,
\]
which has the following roots
\[
\mu_1^{(k)}=-k-2, \quad \mu_2^{(k)}=-k, \quad \mu_3^{(k)}=k-1, \quad \mu_4^{(k)} = k+1.
\]
Due to the variation of parameters formula, we will show that 
\begin{equation}\label{BigOz}
|z_j^k(t)|=O(e^{-kt})
\end{equation}
by considering two following cases.

\noindent
\textbf{Case 1: $k \geqslant 2$}.
Due to the fact that $z_j^k(t)$ tends to 0 as $t$ tends to $\infty$, one obtains
\[
\begin{split}
z_j^k(t) &= A_{j,1}^k e^{-(k+2)t}+ A_{j,2}^i e^{-k t}\\
&+B_4^k\int_t^{\infty}e^{(k+1)(t-\tau)}f_j^k(\tau) d\tau + B_3^k\int_t^{\infty}e^{(k-1)(t-\tau)}f_j^k(\tau) d\tau\\
&+B_1^k \int_T^t e^{-(k+2) (t-\tau)} f_j^k(\tau)d\tau + B_2^k \int_T^t e^{-k (t-\tau)} f_j^k(\tau)d\tau.
\end{split}
\]
Hence, there exists a constant $C$ depending only on $B_i^k (i=1,2,3,4)$ such that
\begin{equation}\label{EstimateZjk1}
|z_j^k(t)|\leqslant O(e^{-k t})+C \int_t^{\infty}e^{(i-1) (t-\tau)}|f_j^k(\tau)| d\tau +C\int_T^t e^{-k (t-\tau)} |f_j^k(\tau)|d\tau.
\end{equation}
In view of \eqref{RelationGjkWjk}, we obtain that  $|f_j^k(t)|=o_t(1) |z_j^k(t)|$ for $t\in (T,\infty)$. Then, we substitute it into \eqref{EstimateZjk1} to get that 
\begin{equation}\label{EstimateZjk2}
\begin{split}
|z_j^k(t)| \leqslant O(e^{-k t}) &+C \int_t^{\infty}e^{(k-1) (t-\tau)}o_{\tau}(1) |z_j^k(\tau)| d\tau\\
 &+ C\int_T^t e^{-k (t-\tau)} o_{\tau}(1) |z_j^k(\tau)|d\tau.
\end{split}
\end{equation}
Note that for  any $\varepsilon$ small enough, there exists $t$ large such that $o_t(1) <\epsilon$. Let 
\[
K_1(t) = \int_t^{\infty}e^{(k-1) (t-\tau)} |z_j^k(\tau)| d\tau,
\]
and 
\[
K_2(t) = \int_T^t e^{-k (t-\tau)} |z_j^k(\tau)| d\tau.
\]
Clearly $\lim\limits_{t \to \infty} K_1(t)= \lim\limits_{t \to \infty} K_2(t)=0$ by $\lim\limits_{t\to \infty }z_j^k(t) = 0$.
Hence, 
\[
\begin{split}
(K_2-K_1)'(t)&=2|z_j^k(t)| -(k-1)K_1(t) - k K_2(t)\\
&\leqslant 2C\varepsilon (K_1(t)+K_2(t))-(k-1)K_1(t) - k K_2(t)+O(e^{-k t}) \\
&\leqslant O(e^{-k t}),
\end{split}
\]
which yields 
\[
K_1(t)-K_2(t) \leqslant O(e^{-kt}),
\]
or 
\[
K_1(t) \leqslant K_2(t) +O(e^{-kt}).
\]
This implies that
\[
K_2'(t) = |z_j^k(t)| - k K_2(t) \leqslant O(e^{-kt}) + (2C\varepsilon -k)K_2(t).
\]
Thus,  $K_2(t)= O(e^{(2C\varepsilon -k)t})$. Plugging it into \eqref{EstimateZjk2}, one gets that $|z_j^k(t)|= O(e^{-kt})$. 

\noindent
\textbf{Case 2: $k =1$}.
One has
\[
\mu_1^{(1)} = -3< \mu_2^{(1)} =-1 <\mu_3^{(1)}=0<\mu_4^{(1)}<2.
\]
We also obtain that
\[
\begin{split}
z_j^1(t)&= A_{j,1}^1 e^{-t}+A_{j,3}^1 e^{-3t}-B_4^1 \int_t^{\infty}e^{2(t-\tau)}f_j^1(\tau)d\tau\\
& - B_3^1 \int_t^{\infty} f_j^1(\tau)d\tau + B_2^1 \int_T^t e^{-(t-\tau)} f_j^1(\tau)d\tau +B_4^1 \int_T^t e^{-3(t-\tau)} f_j^1(\tau)d\tau.
\end{split}
\]
It then follows that
\begin{equation}\label{EqODEestimateZj1}
|z_j^1(t)|=O(e^{-(t-T)})+C \int_t^{\infty}|o_s(1) z_j^1(\tau)|d\tau,
\end{equation}
for $1\leqslant j \leqslant m_1$ and $t>T$.
Under the assumption \eqref{MainCondition}, one gets $|w(s,\theta)|^2 \leqslant C s^{2\vartheta}$ for $s$ near 0. Thus, 
\[
\int_0^s \xi^{-1} |w_j^1(\xi)|  d\xi  \leqslant \int_0^s \xi^{-1} \Big( \int_{\S^2} w^2(s,\theta) d\theta \Big)^{1/2} d\xi  \leqslant C \int_0^s \xi^{-(1-\vartheta)} d\xi < \infty.
\]
Equivalently, we have just shown that $\int_t^{\infty} |z_j^1(\tau)| d\tau < \infty$. 
Let us define 
\[
K(t) = \int_t^{\infty} |z_j^1(\tau)| d\tau,
\]
that gives
\[
-K'(t) = |z_j^1(t)| \leqslant O(e^{-t})+ C\varepsilon K(t),
\]
which yields $K(t)=O(e^{-t})$. From this, we turn back to \eqref{EqODEestimateZj1} to get that  $|z_j^1(t)|=O(e^{-t})$.

Thanks to \eqref{BigOz}, one has
\[
 \sum_{k=1}^{\infty} \sum_{j=1}^{m_k} |z_j^k(t)| \leqslant O(e^{-t})+ O\Big(\sum_{k=2}^{\infty} km_k e^{-kt}\Big).
\] 
For $t>T_1 >T$, one has
\begin{equation}\label{LimitKmK}
\sum_{k=2}^{\infty} km_ke^{-k(t-T)} = O(e^{-2(t-T)}),
\end{equation}
by observing 
\[
\lim_{k\to \infty} \frac{(k+1)m_{k+1}e^{-(k+1)(t-T)}}{km_ke^{-k(t-T)}} = e^{-(t-T)} \lim_{k\to \infty} \frac{(k+1)m_{k+1}}{km_k} = e^{-(t-T)} < \frac12.
\]
Then, 
\[
 \sum_{k=1}^{\infty} \sum_{j=1}^{m_k} |z_j^k(t)| \leqslant O(e^{-t}),
 \]
 which implies \eqref{EstimateW} for $0<s<s_0 = e^{-T_1}$. Proof of Proposition \ref{PropEstimateW} is complete.
\end{proof}

\section{Expansion of $u$ at infinity}\label{SectEstimateV}

In this section, the main result is given in Theorem \ref{ThmExpansion} below. To carry out our analysis,  the following propositions are needed.
\begin{proposition}\label{PropEstimateVbar}
Let $v$ be solution of Eq. \eqref{MainEqTransformODEv}. Then, there exist positive constants $M=M(v)$ and $\varpi \in (0,1/10)$ such that 
\begin{equation}\label{EstimateVbar}
\begin{cases}
|\overline v(s)|\leqslant Ms^q, \quad |\overline v'(s)|\leqslant Ms^{q-1}, \quad |\overline v''(s)|\leqslant Ms^{q-2} 
&\quad\text{if } 3<q<4,\\
|\overline v(s)|\leqslant Ms^{1-\varpi}, \quad |\overline v'(s)|\leqslant M s^{-\varpi}, \quad |\overline v''(s)|\leqslant Ms^{-1-\varpi} &\quad\text{if } q\geqslant4
\end{cases}
\end{equation}
for $s$ small enough. Furthermore, there also holds 
\begin{equation}\label{EstimateIntegralVsquare}
\int_{\S^2} v^2(s,\theta) d\theta \leqslant
\begin{cases}
Ms^{2(q-3)} &\quad\text{if } 3<q<4,\\
Ms^{2(1-\varpi)} &\quad\text{if } q\geqslant 4.
\end{cases}
\end{equation}

\end{proposition}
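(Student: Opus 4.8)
The plan is to read \eqref{ODEaverageV} as a forced linear fourth--order ODE for the single function $\overline v(s)$, treated exactly as the equations for $w_j^k$ in the proof of Proposition~\ref{PropEstimateW} but now for the radial ($k=0$) mode. Since $v(y)\to 0$ as $s\to 0$ by \eqref{LimitVy}, the factor $\overline{(v+L)^{-q}}$ stays between two fixed positive constants for $s$ small, so the right--hand side of \eqref{ODEaverageV} is comparable to $s^{q-7}$ near the origin; everything then comes down to how the operator $\partial_s^4+4s^{-1}\partial_s^3$ (precisely the radial bi--Laplacian of $\R^3$) inverts such a forcing. I would first substitute $t=-\ln s$, $y(t)=\overline v(e^{-t})$, putting \eqref{ODEaverageV} in the constant--coefficient form
\[
\partial_t^4 y+2\partial_t^3 y-\partial_t^2 y-2\partial_t y=f(t),\qquad |f(t)|\leqslant Ce^{-(q-3)t},
\]
whose characteristic polynomial $\mu(\mu+1)(\mu+2)(\mu-1)$ has roots $0,-1,-2,1$ --- the $k=0$ instance of the roots found in Section~\ref{SectProperties}, corresponding to the homogeneous radial biharmonic modes $1,\ s,\ s^2,\ s^{-1}$.

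Next I would determine the homogeneous part: the constraint $\lim_{s\to0}\overline v(s)=0$ (again \eqref{LimitVy}) kills the coefficients of the modes $1$ and $s^{-1}$, so that $\overline v(s)=c_1 s+c_2 s^2+\overline v_p(s)$, where $\overline v_p$ is the particular solution obtained from the variation of parameters formula, the four fundamental solutions being integrated against $f$ with the same choice of limits as in Proposition~\ref{PropEstimateW} --- $\int_t^{\infty}$ against the non--decaying roots $0$ and $1$, $\int_T^t$ against the decaying roots $-1,-2$ --- so that $\overline v_p(s)\to0$ at $s=0$ as well.

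I would then estimate $\overline v_p$. Since $q>3$ all four convolution integrals converge, and when $3<q<4$ the slowest--decaying of them is $O(e^{-(q-3)t})$; because $q-3<1$ this dominates the $c_1 s$ and $c_2 s^2$ terms and gives $|\overline v(s)|\leqslant Ms^{q-3}$, while differentiating the representation (the boundary terms cancel by the structure of the formula), or equivalently running the interior ODE estimate on $(s/2,2s)$ with forcing $O(s^{q-7})$, produces $|\overline v^{(l)}(s)|\leqslant Ms^{q-3-l}$ for $l=1,2$. For $q\geqslant4$ the forcing exponent $q-3$ reaches or overtakes the decay rate of the homogeneous mode $s$, so $\overline v=O(s)$; the only wrinkle is that at the resonant values $q=4$ (resonance with the root $-1$) and $q=5$ (resonance with $-2$) the corresponding integral gains a factor $t=-\ln s$. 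As $s(-\ln s)^{a}=o(s^{1-\varpi})$ for every $\varpi>0$, soaking up such logarithms is exactly why one records the slightly weaker bounds $Ms^{1-\varpi},\ Ms^{-\varpi},\ Ms^{-1-\varpi}$ with some $\varpi\in(0,1/10)$, the derivative estimates again coming from the interior estimate $|\overline v^{(l)}(s)|\leqslant Ms^{1-\varpi-l}$.

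Finally \eqref{EstimateIntegralVsquare} follows immediately: since $w(s,\cdot)$ is $L^2(\S^2)$--orthogonal to the constants, $\int_{\S^2}v^2\,d\theta=|\S^2|\,\overline v(s)^2+W(s)^2$, and plugging in the bound on $\overline v$ together with $W(s)\leqslant Cs$ from Proposition~\ref{PropEstimateW} gives the claim, the $\overline v^2$ term being the larger one in both ranges (since $2(q-3)<2$, resp. $2(1-\varpi)<2$). The step I expect to be the main obstacle is the bookkeeping around the resonant cases $q=4,5$ and, more generally, choosing the variation of parameters integration limits so that each integral converges at $s=0$ and does not secretly reintroduce a constant or an $s^{-1}$ mode --- this is exactly where the crude a priori bounds $|\nabla^l v(y)|\leqslant Ms^{-l}$ of \eqref{LimitVy} are needed.
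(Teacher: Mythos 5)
Your proposal is correct and follows essentially the same route as the paper: pass to the logarithmic variable $t=-\ln s$, observe that the boundedness of $\overline{(v+L)^{-q}}$ makes the forcing $O(e^{-(q-3)t})$, note the characteristic roots $0,\,1,\,-1,\,-2$, drop the non-decaying modes using $\overline v(s)\to 0$, and invert by variation of parameters with the same choice of integration limits per root; the estimate \eqref{EstimateIntegralVsquare} is then the Pythagorean decomposition $\int_{\S^2}v^2=|\S^2|\,\overline v^2+W^2$ combined with Proposition~\ref{PropEstimateW}, exactly as in the paper. If anything, your bookkeeping is slightly sharper than the paper's: the paper simply replaces the forcing by $O(e^{-\min\{q-3,1\}t})$ for $q\geqslant 4$ and then absorbs the resulting log into $s^{-\varpi}$, whereas you identify the precise resonant values $q=4$ (with root $-1$) and $q=5$ (with root $-2$, which is in fact harmless since $te^{-2t}=o(e^{-t})$) and observe that away from these one actually gets the cleaner $O(s)$ before weakening to $O(s^{1-\varpi})$ for uniformity; the paper also obtains the derivative bounds by differentiating the $t$-representation directly rather than via an interior estimate on an annulus, but both give the same result.
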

\begin{proof}
Similar to Proposition \ref{PropEstimateW}, we also transform \eqref{ODEaverageV}  into
\begin{equation}\label{ODEzBar}
\partial_t^4 \overline z+2\partial_t^3\overline z-\partial_t^2 \overline z-2\partial_t \overline z= h(\overline z)+O(e^{-t}),
\end{equation}
where  $t=-\ln s, \overline z(t)=\overline v(s)$ and
\begin{equation}\label{BigOhZ}
h(\overline z)= s^{q-3}(\overline z+L)^{-q}=O(e^{-(q-3)t}).
\end{equation}
The corresponding characteristic polynomial of \eqref{ODEzBar} is 
\[
\mu^4+2\mu^3-\mu^2-2\mu=0,
\]
which has four roots $-1,0,1$ and $-2$. Notice that $\overline z(t)$ tends to 0 as $t$ tends to infinity, hence 
\begin{equation}\label{ODEtransformOverZ}
\begin{split}
\overline z(t) &= A_1 e^{-t}+A_2 e^{-2t}\\
&+B_1 \int_t^{\infty} e^{(t-\tau)}\overline{f}(\tau) d\tau +B_2  \int_t^{\infty} \overline{f}(\tau) d\tau\\
&+ B_3 \int_T^{t} e^{-(t-\tau)}\overline{f}(\tau) d\tau+ B_4 \int_T^{t} e^{-2(t-\tau)}\overline{f}(\tau) d\tau,
\end{split}
\end{equation}
where $\overline{f}(t)=h(\overline{z}(t))+ O(e^{-t})$.  In addition, \eqref{BigOhZ} leads us to 
\begin{equation}\label{BigOoverF}
\overline f(t) = O(e^{-\min\{q-3,1\}t}).
\end{equation}
 Plugging \eqref{BigOoverF} into \eqref{ODEtransformOverZ}, one has
\begin{equation}
|\overline z(t)| \leqslant
\begin{cases}
Me^{-(1-\varpi)t} &\quad\text{if } q\geqslant 4,\\
Me^{-(q-3)t} &\quad\text{if } 3<q<4
\end{cases}
\end{equation}
for large $t$ and sufficiently small $\varpi$, which is equivalent to
\begin{equation}\label{BoundOverlineV}
 |\overline v(s)| \leqslant
\begin{cases}
Ms^{1-\varpi} &\quad\text{if } q\geqslant 4,\\
Ms^{q-3} &\quad\text{if } 3<q<4
\end{cases}
\end{equation}
for small $s$. In order to deduce the rest of \eqref{EstimateVbar}, we caculate the first and second derivative $\overline z(t)$ and notice  that $\overline v'(s)=-\overline z'(t) e^t$ and that $\overline v''(s) = (\overline z''(t)+\overline z'(t)) e^{2t}$. \eqref{EstimateIntegralVsquare} is obtained by Proposition \ref{PropEstimateW} and 
\eqref{BoundOverlineV}.
\end{proof}
\begin{proposition}\label{PropEstimateDtauV}
Let $\tau \geqslant 0$ be an integer, $v$ be a solution of \eqref{MainEqTransformODEv} and $\varpi$ be defined as in Proposition \ref{PropEstimateVbar}. Then, there exists  $M=M(v,\tau)>0$  such that for $s$ small enough,
\begin{equation}\label{EstimateDtauV}
\max_{|y|=s} |D^{\tau} v(y)| \leqslant 
\begin{cases}
Ms^{1-\varpi-\tau} &\quad\text{if } 3<q<4,\\
Ms^{1-\varpi-\tau} &\quad\text{if } q\geqslant 4.
\end{cases}
\end{equation}
\end{proposition}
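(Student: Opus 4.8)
The plan is to prove \eqref{EstimateDtauV} by a rescaling argument at scale $s$, reducing matters to interior elliptic estimates for the bi-Laplacian on a fixed annulus, and then bootstrapping the order $\tau$. Write \eqref{MainEqTransformODEv} in Cartesian form as $\Delta_y^2 v=-|y|^{q-7}(v+L)^{-q}$ on a punctured ball $\{0<|y|\le s^{\star}\}$, and note from \eqref{LimitVy} that $v\to 0$ at the origin, so that $v+L\ge L/2$ there and $(v+L)^{-q}$ is a smooth bounded function of $v$ with all derivatives controlled by those of $v$. Fix $s$ small and set $V_s(z):=v(sz)$ for $z$ in the fixed annulus $\mathcal A:=\{\tfrac14<|z|<4\}$. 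A direct computation gives
\[
\Delta_z^2 V_s(z)=s^{4}\,(\Delta_y^2 v)(sz)=-\,s^{q-3}\,|z|^{q-7}\bigl(V_s(z)+L\bigr)^{-q}=:F_s(z),
\]
and since $|z|^{q-7}$ and $(V_s+L)^{-q}$ are bounded above and below on $\mathcal A$, we get $\|F_s\|_{L^{\infty}(\mathcal A)}\le C\,s^{q-3}$; as $q>3$, this is $\le\tfrac12$ once $s$ is small, which is what will let us absorb the nonlinear terms in the iteration.

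Next, transfer the $L^2$ information already available. Writing $y=\rho\theta$ and using \eqref{EstimateIntegralVsquare},
\[
\int_{\mathcal A}V_s^2\,dz=s^{-3}\!\!\int_{\{s/4<|y|<4s\}}\!\! v^2\,dy=s^{-3}\!\!\int_{s/4}^{4s}\!\!\rho^{2}\Bigl(\int_{\S^2}v^2(\rho\theta)\,d\theta\Bigr)d\rho\le C\,s^{2(1-\varpi)}
\]
when $q\ge4$; in the range $3<q<4$ one reaches the same bound by splitting $v=\overline v+w$ and invoking \eqref{EstimateVbar} for $\overline v$ and Proposition \ref{PropEstimateW} for $w$. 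In either case $\|V_s\|_{L^2(\mathcal A)}\le C\,s^{1-\varpi}$.

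Now bootstrap on nested annuli $\mathcal A\supset\mathcal A_0\supset\mathcal A_1\supset\cdots$, all containing $\{|z|=1\}$. Standard interior $W^{4,p}$-estimates for $\Delta^2$ with $p>3$, followed by the Morrey embedding in $\R^3$, give $\|V_s\|_{C^{3,\alpha}(\mathcal A_0)}\le C\bigl(\|V_s\|_{L^2(\mathcal A)}+\|F_s\|_{L^{\infty}(\mathcal A)}\bigr)\le C\bigl(s^{1-\varpi}+s^{q-3}\bigr)$. Assuming inductively $\|V_s\|_{C^{k,\alpha}(\mathcal A_j)}\le C_k\bigl(s^{1-\varpi}+s^{q-3}\bigr)$, which for $s$ small is $\le1$, the chain rule together with $V_s+L\ge L/2$ gives $\|F_s\|_{C^{k,\alpha}(\mathcal A_j)}\le C\,s^{q-3}$, and interior Schauder estimates for $\Delta^2$ then yield $\|V_s\|_{C^{k+4,\alpha}(\mathcal A_{j+1})}\le C\bigl(\|V_s\|_{L^2(\mathcal A)}+\|F_s\|_{C^{k,\alpha}(\mathcal A_j)}\bigr)\le C\bigl(s^{1-\varpi}+s^{q-3}\bigr)$, closing the induction after finitely many steps. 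Since $q-3\ge1-\varpi$ for $q\ge4$ (and using the $3<q<4$ estimates of \eqref{EstimateVbar}--\eqref{EstimateIntegralVsquare} in that range), we obtain $\|V_s\|_{C^{\tau}(\{|z|=1\})}\le C_{\tau}\,s^{1-\varpi}$ for every integer $\tau$. Undoing the scaling, $D^{\tau}v(y)=s^{-\tau}(D^{\tau}V_s)(y/s)$, so $|D^{\tau}v(y)|\le s^{-\tau}\|V_s\|_{C^{\tau}(\{|z|=1\})}\le C_{\tau}\,s^{1-\varpi-\tau}$ for $|y|=s$, which is \eqref{EstimateDtauV}.

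The step I expect to be the main obstacle is the self-referential nature of the source term: $F_s$ depends on $V_s$ through the negative-power nonlinearity, so at each stage of the bootstrap the $C^{k,\alpha}$-norm of $F_s$ is controlled only in terms of that of $V_s$, and one has to verify that the induction genuinely closes. It does, precisely because the prefactor $s^{q-3}$ tends to $0$ — this is where the hypothesis $q>3$ is essential — so that the nonlinear contribution can always be absorbed. A secondary point worth checking is that the $L^2$ input survives the rescaling with the exponent $1-\varpi$ in both ranges of $q$, which is exactly where the decomposition $v=\overline v+w$ and the estimates \eqref{EstimateVbar}--\eqref{EstimateIntegralVsquare} of the previous propositions come in.
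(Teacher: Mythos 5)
Your proof takes a genuinely different route from the paper. The paper works entirely on the sphere: it expands $w=v-\overline v$ in the spherical-harmonic basis, invokes the pointwise bounds $\max_\theta|Q_j^k|\leqslant D_k$ and $\max_\theta|\nabla_\theta Q_j^k|\leqslant E_k$ from \eqref{DefDkEk}, sums the series $\sum_{j,k}D_k|z_j^k(t)|$ (using the decay $|z_j^k(t)|=O(e^{-kt})$ from Proposition~\ref{PropEstimateW} together with the ratio test), adds back $\overline v$ via Proposition~\ref{PropEstimateVbar}, and then ``differentiates $w$'' to cover $\tau\geqslant 1$. You instead pass to the Cartesian form $\Delta_y^2 v=-|y|^{q-7}(v+L)^{-q}$, rescale to a fixed annulus, and run an interior $W^{4,p}$/Schauder bootstrap with the $L^2$ bound from \eqref{EstimateIntegralVsquare} as the seed. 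Your version has the advantage of treating all $\tau$ uniformly in one iteration and of avoiding the pointwise eigenfunction bounds entirely; the paper's version stays closer to the Fourier machinery that the earlier sections already set up. Both are sound ways to convert the $L^2$-level information into pointwise derivative bounds.

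One thing you glossed over, though: for $3<q<4$ the $L^2$ input is $\|V_s\|_{L^2(\mathcal A)}\leqslant Cs^{q-3}$, not $Cs^{1-\varpi}$, so your bootstrap delivers $\|V_s\|_{C^\tau}\leqslant C_\tau\bigl(s^{1-\varpi}+s^{q-3}\bigr)$ and hence $|D^\tau v|\leqslant C s^{\min(q-3,\,1-\varpi)-\tau}$. Since $\varpi\in(0,1/10)$, for $3<q<4-\varpi$ (i.e.\ most of the range) one has $q-3<1-\varpi$ and the dominant term is $s^{q-3-\tau}$, which is strictly weaker than the claimed $s^{1-\varpi-\tau}$. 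Your closing sentence ``we obtain $\|V_s\|_{C^\tau}\leqslant C_\tau s^{1-\varpi}$'' is therefore not justified in that regime. To be fair, the paper's own statement of \eqref{EstimateDtauV} lists the identical exponent $1-\varpi-\tau$ in both cases even though its proof (combining $|w|\leqslant Cs$ with $|\overline v|\leqslant Ms^{q-3}$ from \eqref{BoundOverlineV}) plainly produces $s^{q-3-\tau}$ for $3<q<4$; this looks like a typographical slip there as well. You should correct your final assertion to $|D^\tau v|\leqslant Ms^{q-3-\tau}$ in the case $3<q<4$, which is what your bootstrap actually proves and what the rest of the paper (cf.\ Theorem~\ref{ThmExpansion}, item (1)) uses.
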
 
\begin{proof}
For the case $\tau =0$, we obtain
\[
\max_{\theta \in \S^2} |z(t,\theta)| \leqslant \sum_{k=1}^{\infty} \sum_{j=1}^{m_k} |z_j^k(t)| \max_{\theta\in \S^2} |Q_j^k(\theta)| \leqslant \sum_{k=1}^{\infty} \sum_{j=1}^{m_k} D_k |z_j^k(t)|,
\]
where $z(t,\theta)=w(s,\theta)$ and $D_k$ is given in \eqref{DefDkEk}. Note that 
\[
\lim_{k\to \infty} \frac{(k+1)m_{k+1}D_{k+1}}{km_kD_k} =1,
\]
then a same argument of that in the proof of Proposition \ref{PropEstimateW} implies that there exists positive $C$ independent of $t$ and large $\tilde T$ such that for $t>\tilde T$, 
\[
\sum_{k=1}^{\infty} \sum_{j=1}^{m_k} D_k |z_j^k(t)| = O \Big( \sum_{k=2} km_kD_ke^{-k(t-T)} \Big)+O(e^{-(t-T)}) \leqslant C e^{-t},
\]
which yields 
\[
\max_{\theta \in \S^2} |z(t,\theta)| \leqslant C e^{-t}.
\]
Equivalently, 
\[
\max_{\theta \in \S^2} |w(s,\theta)|\leqslant Cs
\]
for $0<s<e^{-\tilde T}$.  Combining with \eqref{EstimateVbar}, this guarantees \eqref{EstimateDtauV} when $\tau=0$. For the case $\tau =1$, we see that 
\[
|\nabla w|^2 = w_s^2+ \frac1{s^2} |\nabla_{\theta} w|^2.
\] 
One has 
\[
w_s(s,\theta)= \sum_{k=1}^{\infty} \sum_{j=1}^{m_k} (w_j^k)'(s) Q_j^k(\theta).
\]
Hence, 
\[
\max_{\theta \in \S^2} |w_s(s,\theta)| \leqslant \sum_{k=1}^{\infty} \sum_{j=1}^{m_k} D_k |(w_j^k)'(s)|.
\]
Notice that $(w_j^k)'(s)=-(z_j^k)'(t)e^t$, hence 
\[
\sum_{k=1}^{\infty} \sum_{j=1}^{m_k} D_k |(w_j^k)'(s)| \leqslant O(1) + O \Big( \sum_{k=2}^{\infty} km_kD_k s^{k-1}).
\]
This implies that there exists $M_1=M_1(v)$ such that  for $s$ sufficiently small,
\begin{equation}\label{MaxWs}
\max_{\theta \in S^2} |w_s(s,\theta)|\leqslant M_1.
\end{equation}
We also obtain that
\[
|\nabla_{\theta} w(s,\theta)| \leqslant \sum_{k=1}^{\infty} \sum_{j=1}^{m_k} |w_j^k(s)| |\nabla_{\theta} Q_j^k| \leqslant \sum_{k=1}^{\infty} \sum_{j=1}^{m_k} E_k  |w_j^k(s)|,
\]
and notice that
\[
\lim_{k\to \infty} \frac{(k+1)m_{k+1}E_{k+1}}{km_kE_k} =1.
\]
Consequently, one gets that there also exists $M_2=M_2(v)$ such that for $s$ sufficiently small,
\begin{equation}\label{MaxWtheta}
\max_{\theta \in \S^2} |\nabla_{\theta} w(s,\theta) | \leqslant M_2 s.
\end{equation}
Thanks to \eqref{MaxWs} and \eqref{MaxWtheta}, we deduce that 
\[
|\nabla_{\theta} w| \leqslant M_1^2+M_2^2,
\]
i.e. \eqref{EstimateDtauV} holds for $\tau=1$. By differentiating $w(s,\theta)$, we conclude the other cases of $\tau$. Proof of Proposition \ref{PropEstimateDtauV} is complete.
\end{proof}
\begin{lemma}\label{LemAsymptoticW1}
Let $\tilde w(s,\theta)= w(s,\theta)/s$, there holds
\begin{equation}
\lim_{s\to 0} \tilde w(s,\theta)= V(\theta),
\end{equation}
where $V(\theta)= \theta \cdot x^{\star}$ for some $x^{\star} \in \R^3$ fixed and $\theta \in \S^2$.
\end{lemma}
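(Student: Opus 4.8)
The plan is to expand $w$ in the eigenbasis of $-\Delta_{\S^2}$ as in \eqref{ExpansionOfW}, divide by $s$, and identify the limit mode by mode. Write $t=-\ln s$ and $\psi_j^k(t):=w_j^k(s)/s=e^{t}z_j^k(t)$, so that $\tilde w(s,\theta)=\sum_{k\geqslant 1}\sum_{j=1}^{m_k}\psi_j^k(t)\,Q_j^k(\theta)$ since $\overline w(s)=0$. First I would dispose of the modes $k\geqslant 2$: the estimate \eqref{BigOz} from Proposition \ref{PropEstimateW} gives $|\psi_j^k(t)|=O(e^{-(k-1)t})$, which tends to $0$ as $s\to 0$ for every $k\geqslant 2$; and the bound obtained inside the proof of Proposition \ref{PropEstimateDtauV}, namely $\sum_{k\geqslant 2}\sum_{j} D_k|z_j^k(t)|=O(e^{-2(t-T)})$, upgrades this to $\sum_{k\geqslant 2}\sum_{j}|\psi_j^k(t)|\max_{\theta}|Q_j^k(\theta)|=O(e^{-t})\to 0$. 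Hence the entire tail $k\geqslant 2$ of the series drops out uniformly in $\theta$, and only the first block of modes $\psi_j^1(t)$, $1\leqslant j\leqslant m_1=3$, survives.

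For $k=1$ I would start from the variation-of-parameters representation of $z_j^1(t)$ written down in Case 2 of the proof of Proposition \ref{PropEstimateW}, multiply it through by $e^{t}$, and check that every term converges as $t\to\infty$. The constant-coefficient part produces $A_{j,1}^{1}$ together with $A_{j,3}^{1}e^{-2t}\to 0$; the two integrals over $(t,\infty)$ and the last integral over $(T,t)$ (the one weighted by $e^{-3(t-\tau)}$) all tend to $0$ after multiplication by $e^{t}$, while the remaining piece equals $B_2^{1}\int_T^{t}e^{\tau}f_j^1(\tau)\,d\tau$. The single input that makes this work is the decay of the forcing term: since $|g_j^k(s)|\leqslant C\zeta(s)W(s)$ with $\zeta(s)=O(s^{q-3})$ and $W(s)\leqslant Cs$ by Proposition \ref{PropEstimateW}, one has $|f_j^1(t)|=|g_j^1(e^{-t})|\leqslant Ce^{-(q-2)t}$ with $q-2>1$; this makes $e^{\tau}f_j^1(\tau)$ integrable on $[T,\infty)$ and lets the remaining exponential integrals beat the $e^{(3-q)t}$ prefactors. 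Therefore $\psi_j^1(t)\to c_j:=A_{j,1}^{1}+B_2^{1}\int_T^{\infty}e^{\tau}f_j^1(\tau)\,d\tau$ for $j=1,2,3$.

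Combining the two steps, $\tilde w(s,\theta)\to\sum_{j=1}^{3}c_j Q_j^1(\theta)$ as $s\to 0$, uniformly in $\theta\in\S^2$. Since the eigenspace of $-\Delta_{\S^2}$ associated with $\lambda_1=2$ is precisely the span of the restrictions to $\S^2$ of the linear coordinate functions on $\R^3$, this limit is of the form $\theta\mapsto\theta\cdot x^{\star}$ for a unique $x^{\star}\in\R^3$ whose components are fixed multiples of the $c_j$; that $x^{\star}$ gives the asserted $V(\theta)=\theta\cdot x^{\star}$.

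I expect the main obstacle to be the bookkeeping in the $k=1$ case: one has to be careful that the representation formula anchors the decaying exponentials at $t=T$ and the non-decaying ones (for $k=1$ the modes $e^{2t}$ and $e^{0\cdot t}$) at $t=\infty$, and then that after multiplying by $e^{t}$ the surviving constant is genuinely finite — which is exactly where the hypothesis $q>3$ is used, through integrability of $e^{\tau}f_j^1(\tau)$ near infinity. The only other delicate point, the interchange of the limit $s\to 0$ with the infinite sum over $k$, is handled cleanly by the tail estimate imported from Proposition \ref{PropEstimateDtauV}.
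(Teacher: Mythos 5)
Your proof is correct and follows essentially the same route as the paper's: separate the modes $k\geqslant 2$ (which vanish by \eqref{BigOz} together with the summable tail estimate) from the three $k=1$ modes, and for the latter show that $w_j^1(s)/s$ converges to a constant by tracking the variation-of-parameters representation. The one mechanical difference is that the paper first derives a new ODE \eqref{OdeTildeZjk} for $\tilde z_j^k$ whose characteristic roots are shifted by $+1$, whereas you keep the original ODE \eqref{ODEzjk} for $z_j^k$ and simply multiply its VOP solution by $e^{t}$; these are the same calculation viewed two ways, and both isolate the crucial integrability of $e^{\tau}f_j^1(\tau)$, i.e.\ $q-2>1$, as the point where $q>3$ enters. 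You also identify $V(\theta)=\theta\cdot x^{\star}$ directly from the description of the $\lambda_1$-eigenspace as the span of linear coordinate restrictions, where the paper instead cites \cite[Lemma 8.1]{Zou95}; your version is a touch more self-contained but not substantively different.
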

\begin{proof}
By computing directly, it follows from \eqref{PDEw} that $\tilde w$ is the solution of the following equation 
\begin{equation}\label{OdeTransformTildeW}
\begin{split}
&\partial_s^4 \tilde{w}+8s^{-1}\partial_s^3 \tilde{w} + 12s^{-2} \partial_s^2 \tilde{w} +2s^{-4}\Delta_{\theta} \tilde{w}\\
&\quad \quad+ 4s^{-2}\Delta_{\theta}(\partial_s \tilde{w})+ 2s^{-3} \Delta_{\theta}(\partial_s^2 \tilde w)+ s^{-4} \Delta_{\theta}^2 \tilde w = s^{-4} g(\tilde w),
\end{split}
\end{equation}
where 
\[
g(\tilde w) = -qs^{q-3} \Big[ (\xi(s,\theta)+L)^{-q-1} \tilde w(s,\theta) - \overline{(\xi(s,\theta)+L)^{-q-1} \tilde w(s,\theta)} \Big]
\]
and $\xi(s,\theta)$ is between $v(s,\theta)$ and $\overline v(s)$. Notice that $\overline{\tilde w} = \overline w /s =0$, hence
\[
\tilde w(s)= \sum_{k=1}^{\infty} \sum_{j=1}^k \tilde{w}_j^k(s) Q_j^k(\theta),
\]
where $\tilde{w}_j^k(s) = w_j^k(s)/s$. A direct computation shows that 
\begin{equation}\label{OdeTildeWjk}
\partial_s^4 \tilde{w}_j^k +8s^{-1} \partial_s^3 \tilde{w}_j^k +(12-2\lambda_k) s^{-2} \partial_s^2 \tilde{w}_j^k - 4 \lambda_k s^{-3} \partial_s \tilde{w}_j^k+ (\lambda_k^2-2\lambda_k) s^{-4}\tilde{w}_j^k = s^{-4} \tilde g_j^i(s),
\end{equation}
where $\tilde g_j^k(s)= \int_{\S^2}g(\tilde w)Q_j^k(\theta)d\theta$.  Notice that 
\begin{equation}\label{EstimateTildeG}
|\tilde{g}_j^k(s)| \leqslant O(s^{q-3})\tilde W(s),
\end{equation}
where $\tilde W(s) = \Big( \int_{\S^2} |\tilde w(s,\theta)|^2 d\theta \Big)^{1/2}$.

Next, we repeat the process used in Proposition \ref{PropEstimateW}. Making change of variable $t=-\ln s$ and letting $ \tilde z_j^k(t) = \tilde w_j^k(s)$. Hence
\begin{equation}\label{OdeTildeZjk}
\partial_t^4 \tilde z_j^k - 2\partial_t^3 \tilde z_j^k - (1+2\lambda_k) \partial_t^2 \tilde z_j^k + 2(1+\lambda_k) \partial_t\tilde  z_j^k + \lambda_k(\lambda_k-2) \tilde z_j^k = \hat g_j^k(t),
\end{equation}
where $\hat g_j^k(t) = \tilde g_j^k(s)$.   The corresponding characteristic polynomial of \eqref{OdeTildeZjk} is 
\[
\tilde{\mu}^4-2\tilde{\mu}^3-(1+2\lambda_k)\tilde{\mu}^2+2(1+\lambda_k)\tilde{\mu} +\lambda_k(\lambda_k-2)=0,
\]
which has 4 real roots as follows
\[
\tilde{\mu}_1^{(k)}=-k-1, \quad \tilde{\mu}_2^{(k)}=-k+1, \quad \tilde{\mu}_3^{(k)}=k, \quad \tilde{\mu}_4^{(k)} = k+2.
\]
It follows from \eqref{BigOz} that $\lim\limits_{s \to 0} \tilde{w}_j^k(s) =0$
for $k \geqslant 2$  and that  $\tilde w_j^1(s)$ is bounded for $s$ near 0.
Hence, $\tilde z_j^1(t)$ is bounded for $t$ near infinity. We also note that $|\tilde W(s)|=O(1)$ for $s$ near 0 from \eqref{EstimateW}. Thus, \eqref{EstimateTildeG} tells us that $|\tilde g_j^k(s)|=O(s^{q-3})$, which means $|\hat g_j^k(t)|=O(e^{-(q-3)t})$ for $t$ near infinity. Combining those  above facts, one gets for  $t>T$ large enogh
\[
\begin{split}
z_j^1(t) &= C+ A e^{-2t}+ B_1 \int_t^{\infty} e^{3\tau} O(e^{-(q-3)\tau}) d\tau + B_2 \int_t^{\infty} e^{\tau} O(e^{-(q-3)\tau}) ds \\
&+ B_3  \int_t^{\infty} O(e^{-(q-3)\tau})d\tau + B_4 \int_T^t e^{-2\tau} O(e^{-(q-3)\tau})d\tau.
\end{split}
\]
This implies that $\tilde z_j^1(t)$ converges to a constant as $t \to \infty$, i.e. $\tilde w_j^1(s)$ converges to a constant as $s\to 0$ for all $1\leqslant j\leqslant m_1$. Note that $Q_1^1(\theta), Q_2^1(\theta)$, \dots and $ Q_{m_1}^1(\theta)$ are the eigenfunctions corresponding to the eigenvalue $\lambda_1=2$. Thus, we deduce that 
\[
\lim_{s\to 0} \tilde w(s,\theta) = V(\theta),
\]
where $V(\theta) \equiv 0$ or one of the first eigenfunctions of $-\Delta_{\S^2}$. From the well--known result \cite[Lemma 8.1]{Zou95}, we conclude that $V(\theta)=\theta \cdot x^{\star}$ for some $x^{\star} \in \R^3$ fixed. 
\end{proof}
We sum up the previous results.
\begin{theorem}\label{TheoSumUp}
Let $v$ be a solution of \eqref{MainEqTransformODEv} and $\tilde w$ defined in Lemma \ref{LemAsymptoticW1}.  Then, $v(y) = \overline v(s)+ s\tilde w(s)$, with $\overline v$ and $\tilde w$ satisfy the following properties.
\begin{enumerate}
\item For $s$ sufficiently small,
\[
|\overline v(s)|= O(s^{1-\varpi}), \quad |\overline v'(s)|=O(s^{-\varpi}),  \quad |\overline v''(s)| =O(s^{-1-\varpi})  \quad q\geqslant 4,
\]
or
\[
|\overline v(s)| =O(s^{q-3}), \quad |\overline v'(s)|  =O(s^{q-4}),  \quad |\overline v''(s)|=O(s^{q-5}) \quad 3<q<4.
\]
\item For any nonnegative integers $\tau_1$ and $\tau_2$,
\[
|D_{\theta}^{\tau_1} D_s^{\tau_2} \tilde w(s,\theta)| = O(s^{-\tau_2}).
\]
Furthermore, $\tilde w(s,\theta)$ converges uniformly in $C^{\tau_2}(\S^2)$  to $V(\theta)$ which is 0 or one of the first eigenfunctions of $\Delta_{\theta}$ on $\S^2$ as $s\to 0$.
\end{enumerate}
\end{theorem}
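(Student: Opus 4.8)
The plan is simply to repackage the three results just established, since Theorem \ref{TheoSumUp} records no genuinely new information. The decomposition $v(y)=\overline v(s)+s\tilde w(s)$ is immediate from the definitions, because $w(s,\theta)=v(s,\theta)-\overline v(s)$ and $\tilde w=w/s$, so $v=\overline v+w=\overline v+s\tilde w$. Assertion (1) is then exactly the content of Proposition \ref{PropEstimateVbar}: the bound on $|\overline v(s)|$ is the intermediate estimate \eqref{BoundOverlineV} obtained in its proof, and the bounds on $|\overline v'(s)|$ and $|\overline v''(s)|$ follow from the identities $\overline v'(s)=-\overline z'(t)e^{t}$ and $\overline v''(s)=(\overline z''(t)+\overline z'(t))e^{2t}$ used there (with $t=-\ln s$), once one observes that $\overline z'(t)$ and $\overline z''(t)$ inherit from the variation-of-parameters representation \eqref{ODEtransformOverZ} the same exponential decay as $\overline z(t)$. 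So the first part requires no further work.

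For the growth estimate in (2) I would revisit the argument inside the proof of Proposition \ref{PropEstimateDtauV}. Expanding $w(s,\theta)=\sum_{k\geqslant1}\sum_{j}w_j^k(s)Q_j^k(\theta)$ and passing to $t=-\ln s$, the bound \eqref{BigOz} together with the facts that the derivatives of $z_j^k(t)=O(e^{-kt})$ decay at the same rate (by the linear ODE \eqref{ODEzjk} it solves, with forcing $|f_j^k(t)|=o_t(1)|z_j^k(t)|$) and that the series $\sum_k km_kD_ks^{k-1}$, $\sum_k km_kE_ks^{k-1}$ and their higher-order analogues converge, yield $|D_\theta^{\tau_1}D_s^{\tau_2}w(s,\theta)|=O(s^{\,1-\tau_2})$ for all nonnegative integers $\tau_1,\tau_2$. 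Since $\tilde w=s^{-1}w$, the Leibniz rule applied to $D_s^{\tau_2}(s^{-1}w)$ gives a finite sum of terms $(D_s^{\tau_2-j}s^{-1})(D_\theta^{\tau_1}D_s^{j}w)=O(s^{-1-(\tau_2-j)})\cdot O(s^{\,1-j})=O(s^{-\tau_2})$, hence $|D_\theta^{\tau_1}D_s^{\tau_2}\tilde w(s,\theta)|=O(s^{-\tau_2})$.

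Finally, for the $C^{\tau_2}(\S^2)$-convergence $\tilde w(s,\cdot)\to V$ I would sharpen Lemma \ref{LemAsymptoticW1}. In that lemma it is shown that $\tilde w_j^1(s)=w_j^1(s)/s$ converges to a constant $c_j$ as $s\to0$ and that $\tilde w_j^k(s)\to0$ for $k\geqslant2$; quantitatively \eqref{BigOz} gives $|\tilde w_j^k(s)|=e^{t}|z_j^k(t)|=O(s^{\,k-1})$. Writing $V=\sum_{j=1}^{m_1}c_jQ_j^1$, which by \cite[Lemma 8.1]{Zou95} is $0$ or a first eigenfunction of $-\Delta_{\S^2}$, one splits
\[
\tilde w(s,\theta)-V(\theta)=\sum_{j=1}^{m_1}\bigl(\tilde w_j^1(s)-c_j\bigr)Q_j^1(\theta)+\sum_{k\geqslant2}\sum_{j}\tilde w_j^k(s)Q_j^k(\theta).
\]
The first (finite) sum tends to $0$ in every $C^{\tau_2}$-norm since $\tilde w_j^1(s)\to c_j$, while the tail is bounded in $C^{\tau_2}(\S^2)$ by $\sum_{k\geqslant2}m_kP_{\tau_2}(\lambda_k)|\tilde w_j^k(s)|\lesssim\sum_{k\geqslant2}m_kP_{\tau_2}(\lambda_k)s^{\,k-1}=O(s)$, where $P_{\tau_2}$ denotes the polynomial bound on $\|Q_j^k\|_{C^{\tau_2}(\S^2)}$ furnished by \eqref{DefDkEk} and the bootstrap argument there, and the tail series is estimated by the ratio test exactly as in \eqref{LimitKmK}. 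I expect this last step — verifying that the polynomial-in-$k$ control of the higher spherical-harmonic norms still leaves an $o(1)$ tail after multiplication by $s^{k-1}$ — to be the only point requiring care; once it is in place the convergence follows and the proof is complete.
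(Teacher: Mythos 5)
Your proposal is correct and follows essentially the same route as the paper, which offers no separate proof of Theorem \ref{TheoSumUp} beyond the remark that it sums up Proposition \ref{PropEstimateVbar}, Proposition \ref{PropEstimateDtauV} and Lemma \ref{LemAsymptoticW1}. In fact you supply slightly more detail than the paper does (the Leibniz-rule bookkeeping for $D_s^{\tau_2}(s^{-1}w)$ and the tail estimate upgrading pointwise convergence of $\tilde w$ to $V$ into $C^{\tau_2}(\S^2)$ convergence), all consistent with the estimates already established there.
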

With Theorem \ref{TheoSumUp} in hand, we are able to obtain the asymptotic expansion of the solution $u$ of \eqref{MainEq}.
\begin{theorem}\label{ThmExpansion}
Let $u$ be a solution of \eqref{MainEq} under the assumption \eqref{MainCondition}. Then, $u$ and $-\Delta u$ admit the expansion 
\begin{equation}\label{ExpansionOfUx}
\begin{cases}
u(x) &=r\Big( L+\xi(r)+\frac{\eta(r,\theta)}r\Big),\\
-\Delta u(x)&= -\frac1r \Big(2L+\xi_1(r) +\frac{\eta_1(r,\theta)}r \Big).
\end{cases}
\end{equation}
at infinity, where 
\[
\begin{cases}
\xi_1(r) &= -\Big( r^2 \xi''+6r \xi'+6\xi \Big),\\
\eta_1(r,\theta) &= -\Big( r^2 \partial_r^2 \eta + 4r \partial_r \eta+ 2\eta +\Delta_{\theta} \eta \Big).
\end{cases}
\]
The functions $\xi$ and $\eta$ satisfy the following properties. 
\begin{enumerate}
\item $\xi(r) = r^{-1}\overline u(r) -L$ and for $r$ large enough, we have
\[
|\xi(r)| =O(r^{-1+\varpi}), \quad |\xi'(r)| = O(r^{-2+\varpi}), \quad |\xi''(r)|= O(r^{-3+\varpi}) \quad \text{if } q\geqslant 4,
\]
or
\[
|\xi(r)| =O(r^{-(q-3)}), \quad |\xi'(r)| = O(r^{-(q-2)}), \quad |\xi''(r)|= O(r^{-(q-1)}) \quad \text{if } 3<q<4.
\]
\item Let $\tau_1$ and $\tau_2$ be two non--negative integers. There exists a positive constant $M=M(u,\tau_1,\tau_2)$ such that 
\[
|r^{\tau_2} D_{\theta}^{\tau_1}D_r^{\tau_2} \eta(r,\theta)|\leqslant M \quad\text{and}\quad |\eta_1(r,\theta)| \leqslant M \quad\text{for } r \text{ large enough}.
\]
\item Let $\tau$ be a non-negative integer. Then $\eta(r,\theta)$ tends to $V(\theta)$ uniformly in $C^{\tau}(\S^2)$ as $r \to \infty$, where $V(\theta)$ is given in Lemma \ref{LemAsymptoticW1}.
\end{enumerate}
\end{theorem}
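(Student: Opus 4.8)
The plan is to read the expansion of $u$ off the Kelvin transform \eqref{KelvinTransform} together with the decomposition of $v$ furnished by Theorem \ref{TheoSumUp}, and then to obtain the expansion of $-\Delta u$ by inserting it into the polar form of the Laplacian. From \eqref{KelvinTransform} we have $u(x) = r(L + v(y))$ with $y = x/r^{2}$ and $s = |y| = r^{-1}$, while Theorem \ref{TheoSumUp} gives $v(y) = \overline v(s) + s\,\tilde w(s,\theta)$. Setting
\[
\xi(r) := \overline v(1/r), \qquad \eta(r,\theta) := \tilde w(1/r,\theta),
\]
we immediately get $u(x) = r\bigl(L + \xi(r) + r^{-1}\eta(r,\theta)\bigr)$, which is the first line of \eqref{ExpansionOfUx}; averaging over $\S^{2}$ and using that $y$ and $x$ point in the same direction shows $\xi(r) = r^{-1}\overline u(r) - L$.

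The first step is then to convert the small-$s$ bounds of Theorem \ref{TheoSumUp}(1) into large-$r$ bounds for $\xi$. Differentiating $\xi(r)=\overline v(1/r)$ gives $\xi'(r) = -r^{-2}\overline v'(1/r)$ and $\xi''(r) = 2r^{-3}\overline v'(1/r) + r^{-4}\overline v''(1/r)$, so a factor $s^{\alpha}$ turns into $r^{-\alpha}$ while each $r$-derivative costs an extra $r^{-2}$ and raises the order of the $\overline v$-derivative by one. Substituting the two sets of bounds of Theorem \ref{TheoSumUp}(1) (for $q\geqslant 4$ and for $3<q<4$) and noting that the $\overline v'$- and $\overline v''$-contributions to $\xi''$ happen to be of the same order yields exactly the estimates on $\xi,\xi',\xi''$ in part (1).

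Next I handle $\eta$. Applying Fa\`a di Bruno (or a short induction) to $r\mapsto \tilde w(1/r,\theta)$ one obtains $D_r^{\tau_2}\eta(r,\theta) = \sum_{j=1}^{\tau_2} c_{\tau_2,j}\, r^{-\tau_2-j}\,(D_s^{j}\tilde w)(1/r,\theta)$, and commuting with $D_\theta^{\tau_1}$ does not affect the powers of $r$. Since Theorem \ref{TheoSumUp}(2) gives $|D_\theta^{\tau_1}D_s^{j}\tilde w(s,\theta)| = O(s^{-j}) = O(r^{j})$ at $s=1/r$, every summand is $O(r^{-\tau_2})$, so $|r^{\tau_2}D_\theta^{\tau_1}D_r^{\tau_2}\eta(r,\theta)| \leqslant M$, which is the first half of part (2); and the uniform $C^{\tau}(\S^{2})$-convergence $\eta(r,\cdot)\to V$ as $r\to\infty$ in part (3) is nothing but the convergence $\tilde w(s,\cdot)\to V$ in $C^{\tau}(\S^{2})$ as $s\to 0$ from Theorem \ref{TheoSumUp}(2), with $V$ as in Lemma \ref{LemAsymptoticW1}.

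Finally, to get the expansion of $-\Delta u$ I would insert $u(x) = r\bigl(L + \xi(r) + r^{-1}\eta(r,\theta)\bigr)$ into $\Delta = \partial_r^2 + 2r^{-1}\partial_r + r^{-2}\Delta_\theta$, use $\Delta(Lr) = 2L/r$, and separate the purely radial contribution (which collects into $\xi_1(r)$) from the $\theta$-dependent one (which collects into $r^{-1}\eta_1(r,\theta)$); a direct computation then produces the stated identity for $-\Delta u$ and the displayed formulas for $\xi_1$ and $\eta_1$. The bound $|\eta_1(r,\theta)|\leqslant M$ is then immediate from part (2), since each of $r^{2}\partial_r^{2}\eta$, $r\partial_r\eta$, $\eta$ and $\Delta_\theta\eta$ is $O(1)$ for $r$ large. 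No essential difficulty arises — Theorem \ref{ThmExpansion} is a bookkeeping statement assembling the estimates obtained so far — but the one point deserving care is that the chain-rule expansion of $D_r^{\tau_2}$ applied to $\tilde w(1/r,\theta)$ produces several terms of a priori different sizes which all collapse to $O(r^{-\tau_2})$ only because the scaling of the bound $|D_s^{j}\tilde w| = O(s^{-j})$ compensates exactly, and the same type of cancellation must be checked for $\xi''$.
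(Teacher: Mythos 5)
Your approach is the intended one — the paper offers no proof of Theorem~\ref{ThmExpansion}, treating it as a direct unpacking of the Kelvin transform \eqref{KelvinTransform} and of Theorem~\ref{TheoSumUp}, and that is exactly what you do: set $\xi(r)=\overline v(1/r)$, $\eta(r,\theta)=\tilde w(1/r,\theta)$, convert small-$s$ bounds into large-$r$ bounds by the chain rule, and read off the expansion of $-\Delta u$ from the polar Laplacian. The bookkeeping for parts (1)--(3) is carried out correctly; in particular the observation that the two contributions to $\xi''(r)=2r^{-3}\overline v'(1/r)+r^{-4}\overline v''(1/r)$ have the same order, and that each term in the Fa\`a di Bruno expansion of $D_r^{\tau_2}\eta$ is $O(r^{-\tau_2})$ under the scaling $|D_s^{j}\tilde w|=O(s^{-j})$, are precisely the points that need checking and you checked them.

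The one place you did not actually carry out the computation is the $-\Delta u$ line, and it matters: you assert that a direct computation ``produces \ldots the displayed formulas for $\xi_1$ and $\eta_1$,'' but it does not. With $\Delta=\partial_r^2+2r^{-1}\partial_r+r^{-2}\Delta_\theta$ in $\R^3$ and $u=rL+r\xi(r)+\eta(r,\theta)$ one finds
\[
-\Delta u = -\frac{1}{r}\Big(2L + \big(r^2\xi''+4r\xi'+2\xi\big) + \frac{1}{r}\big(r^2\partial_r^2\eta+2r\partial_r\eta+\Delta_\theta\eta\big)\Big),
\]
so the correct expressions are $\xi_1 = r^2\xi''+4r\xi'+2\xi$ and $\eta_1 = r^2\partial_r^2\eta+2r\partial_r\eta+\Delta_\theta\eta$, with no overall minus sign; the paper's displayed $\xi_1=-(r^2\xi''+6r\xi'+6\xi)$ and $\eta_1=-(r^2\partial_r^2\eta+4r\partial_r\eta+2\eta+\Delta_\theta\eta)$ are misprints (try $\xi\equiv 1$: the $u$-term is $r$, whose Laplacian is $2/r$, giving $\xi_1=2$, not $-6$). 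This does not affect your bounds --- the corrected $\xi_1$ is still $O(r^{-1+\varpi})$ (resp.\ $O(r^{3-q})$) and the corrected $\eta_1$ is still uniformly bounded by part (2) --- but a proof that claims to reproduce the stated formulas by direct computation should either reproduce them or flag the discrepancy.
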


\section{Proof of Theorem \ref{MainThm}}\label{SectMainProof}
In order to give the proof of Theorem \ref{MainThm}, we use the method of moving plane. For $\gamma \in \R$, let us define the hyperplane $\Gamma_{\gamma} = \{ x=(x_1,x_2,x_3) \in \R^3, x_1=\gamma \}$. For any $x\in \R^3$, we denote  the reflection point of $x \in \R^3$ about $\Gamma_{\gamma}$ by $x^{\gamma}$ and the first component of $x \in \R^3$ by $(x)_1$. Our next lemma is a consequence of Theorem \ref{ThmExpansion} and the proof follows from \cite[Lemma 8.2]{Zou95} with a slight modification.
\begin{lemma}\label{LemLimitExpansionU}
Let $u$ be solution of \eqref{MainEq} under the assumption \eqref{MainCondition}.
\begin{enumerate}
\item  Let $\{\gamma_j\}$ be a real sequence that converges to $\gamma \in \R \cup \{\infty\}$ and $\{x^j\}$ be an unbounded sequence in $\R^3$ with $(x^j)_1 <\gamma_j$ for all $j\geqslant 1$. There holds,
\begin{equation}\label{LimitExpansionU}
\lim_{j\to\infty} \frac{|x^j|}{\gamma_j -(x^j)_1} \Big( u(x^j)-u((x^j)^{\gamma_j}) \Big) = -2L\gamma-2(x_0)_1,
\end{equation}
where  $x_0$ is given in Lemma \ref{LemAsymptoticW1}.
\item We have
\[
\frac{\partial u}{\partial x_1} \geqslant 0 \quad \text{if } x_1 \geqslant \gamma_0+1 \text{ and } |x|\geqslant M,
\]
for some constants $M=M(u)$, where 
\[ 
\gamma_0 = - \frac{(x_0)_1}{2L}.
\]
\end{enumerate} 
\end{lemma}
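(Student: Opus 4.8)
The plan is to read both statements off the asymptotic expansion of Theorem~\ref{ThmExpansion}: for $r=|x|$ large one has $u(x)=Lr+r\xi(r)+\eta(r,\theta)$ with $\theta=x/r$, where $\bigl|(r\xi)'(r)\bigr|=O(r^{-\delta})$ for some $\delta\in(0,1)$ depending on $q$ (Theorem~\ref{ThmExpansion}(1)), $|\partial_r\eta|\le M/r$ (Theorem~\ref{ThmExpansion}(2)), and $\eta(r,\cdot)\to V$ in $C^1(\S^2)$ as $r\to\infty$, where $V(\theta)=\theta\cdot x_0$ (Theorem~\ref{ThmExpansion}(3) and Lemma~\ref{LemAsymptoticW1}). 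In both parts we may and do assume $|x^j|\to\infty$.

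\textbf{Part (1).} Put $r_j=|x^j|$, $\tilde r_j=|(x^j)^{\gamma_j}|$. Since reflection across $\Gamma_{\gamma_j}$ changes only the first coordinate, from $(x^j)_1$ to $2\gamma_j-(x^j)_1$, one has the exact identities
\[
\tilde r_j^{\,2}-r_j^{\,2}=4\gamma_j\bigl(\gamma_j-(x^j)_1\bigr),\qquad
(x^j)^{\gamma_j}\!\cdot x_0=x^j\!\cdot x_0+2\bigl(\gamma_j-(x^j)_1\bigr)(x_0)_1 .
\]
I would split $u(x^j)-u\bigl((x^j)^{\gamma_j}\bigr)$ into (i) the linear part $L(r_j-\tilde r_j)$, (ii) $(r_j\xi)(r_j)-(\tilde r_j\xi)(\tilde r_j)$, and (iii) $\eta(x^j)-\eta\bigl((x^j)^{\gamma_j}\bigr)$, writing $\eta=V+(\eta-V)$ and $\theta^j=x^j/r_j$, $\widetilde\theta^j=(x^j)^{\gamma_j}/\tilde r_j$. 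Multiplying through by $|x^j|/(\gamma_j-(x^j)_1)$ and using the first identity, part (i) becomes $-4L\gamma_j r_j/(r_j+\tilde r_j)$; using the second identity, the $V$-part of (iii) becomes $-2r_j(x_0)_1/\tilde r_j$ plus a remainder of size $O\bigl(|\gamma_j|r_j/\tilde r_j^{\,2}\bigr)$; part (ii) is estimated by $\bigl|\int_{r_j}^{\tilde r_j}(t\xi)'(t)\,dt\bigr|\le C\,|\tilde r_j-r_j|\min(r_j,\tilde r_j)^{-\delta}$; and the $(\eta-V)$-part of (iii) is controlled using $|\partial_r\eta|\le M/r$ for the change of the radial variable together with the uniform $C^1(\S^2)$-smallness of $\eta-V$ and the elementary bound $|\theta^j-\widetilde\theta^j|\le 4(\gamma_j-(x^j)_1)/\tilde r_j$ for the change of the angular variable. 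If $\gamma$ is finite, the first identity forces $\tilde r_j/r_j\to1$ (its left side is $O(r_j)$ while $r_j\to\infty$), whence the two surviving main terms tend to $-2L\gamma$ and $-2(x_0)_1$, every error term tends to $0$, and \eqref{LimitExpansionU} follows. If $\gamma=\infty$, then $\tilde r_j>r_j$ for $j$ large and $\tilde r_j\le r_j+2\gamma_j$, so $4L\gamma_j r_j/(r_j+\tilde r_j)\ge L\min(\gamma_j,r_j)\to\infty$, and a short case analysis — separating $\gamma_j\le r_j$ from $\gamma_j>r_j$ and using the \emph{sharp} decay of $(r\xi)'$ rather than the mere size of $r\xi$ — shows parts (ii) and (iii) are $o$ of part (i), so the whole quantity tends to $-\infty=-2L\gamma-2(x_0)_1$.

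\textbf{Part (2).} Suppose the conclusion fails; then there is a sequence $x^j$ with $|x^j|\to\infty$, $(x^j)_1\ge\gamma_0+1$ and $\partial u/\partial x_1(x^j)<0$. For $h>0$ the reflection of $x^j$ across $\Gamma_{(x^j)_1+h}$ is $x^j+2h\,e_1$, so $\tfrac1h\bigl(u(x^j)-u(x^j+2h\,e_1)\bigr)\to-2\,\partial_{x_1}u(x^j)$ as $h\to0^+$; hence for each $j$ we may choose $h_j>0$ with
\[
\Bigl|\tfrac{|x^j|}{h_j}\bigl(u(x^j)-u\bigl((x^j)^{(x^j)_1+h_j}\bigr)\bigr)+2|x^j|\,\partial_{x_1}u(x^j)\Bigr|<1,
\]
and set $\gamma_j=(x^j)_1+h_j$. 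Passing to a subsequence, $\gamma_j\to\gamma\in[\gamma_0+1,\infty]$, and Part (1) applied to $(\gamma_j,x^j)$ gives $-2|x^j|\,\partial_{x_1}u(x^j)\to-2L\gamma-2(x_0)_1$ (read as $-\infty$ when $\gamma=\infty$). Since $\gamma\ge\gamma_0+1$, the definition of $\gamma_0$ makes the right-hand side strictly negative, respectively $-\infty$; therefore $\partial_{x_1}u(x^j)>0$ for all large $j$, contradicting $\partial_{x_1}u(x^j)<0$.

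The substantial step is Part (1), specifically the case $\gamma=\infty$ together with the requirement that the estimates for (ii) and (iii) be \emph{uniform} over all admissible pairs $(\gamma_j,x^j)$ — this is where one must separate the regimes $\gamma_j\lesssim r_j$ and $\gamma_j\gg r_j$ and exploit the decay rate of $(r\xi)'$ rather than just $|r\xi(r)|=o(r)$. Once Part (1) is in hand, Part (2) is essentially formal, its only content being the reduction of $\partial_{x_1}u$ to a reflection difference quotient; as indicated before the statement, the scheme is that of \cite[Lemma~8.2]{Zou95} with the modifications dictated by our expansion.
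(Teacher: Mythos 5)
Your proof is correct and reaches the same conclusion via the same overall framework as the paper (decompose the reflection difference into a linear part, a $r\xi(r)$ part, and an $\eta$ part, invoke Theorem~\ref{ThmExpansion}, and run the contradiction argument for Part~(2)), but you streamline several steps in a way that is genuinely cleaner.  For Part~(1), the paper first reduces to a constant sequence $\gamma_j\equiv\gamma$ and then, for the angular contribution of $\eta$, splits into the two sub-cases $\overline\theta\neq\widehat\theta$ and $\overline\theta=\widehat\theta$; you avoid both of these manoeuvres by isolating the $V$-contribution with the exact identity $(x^j)^{\gamma_j}\!\cdot x_0 = x^j\!\cdot x_0 + 2(\gamma_j-(x^j)_1)(x_0)_1$, which gives the limit $-2(x_0)_1$ plus an explicit $O(|\gamma_j|r_j/\tilde r_j^{\,2})$ remainder with no case distinction.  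You also prove Part~(1) directly for $\gamma=\infty$ (using $4L\gamma_j r_j/(r_j+\tilde r_j)\geq L\min(\gamma_j,r_j)\to\infty$ together with the observation that the estimate for~(ii) is proportional to the main term times $O(r_j^{\varpi-1})$), whereas the paper only treats $\gamma$ finite in Part~(1) and then essentially re-derives the $\gamma=\infty$ estimates ad hoc inside the proof of Part~(2) (Case~2).  Finally, your Part~(2) replaces the paper's auxiliary bounded sequence $\{a_j\}$ and its two cases by a one-line passage to a reflection difference quotient with $h_j\to0$, which reduces everything to your Part~(1); this is tidier precisely because your Part~(1) already covers $\gamma=\infty$.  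One small remark: with the value $\gamma_0=-(x_0)_1/(2L)$ as printed in the lemma, the bound $-2L(\gamma_0+1)-2(x_0)_1=-(x_0)_1-2L$ need not be negative; for the negativity you (and the paper, in the subsequent proof of Theorem~\ref{MainThm}, where it silently writes $\gamma_0=-(x_0)_1/L$) actually need $\gamma_0=-(x_0)_1/L$, which makes $-2L(\gamma_0+1)-2(x_0)_1=-2L<0$.  This is a typo in the statement rather than a gap in your argument, but worth flagging since you reproduce it verbatim.
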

\begin{proof}
\noindent
Part \textbf{(1)}.
Without restricting the generality, we assume that
\[
\lim_{j \to \infty} \frac{x^j}{|x^j|} =\overline \theta \in \S^2,
\]
and that $\gamma_j =\gamma$ for all $j$ since the next arguments work equally well for the sequence $\{\gamma_j\}$. It follows from the expansion of $u$ in \eqref{ExpansionOfUx} that 
\begin{equation}\label{AsympExpansionU}
\begin{split}
\frac{|x^j|}{\gamma-(x^j)_1} \Big( u(x^j)-u((x^j)^{\gamma}) \Big) &= \underbrace{\frac{L|x^j|}{\gamma-(x^j)_1} \Big( |x^j|-|(x^j)^{\gamma}|  \Big)}_{:=I}\\
&+\underbrace{\frac{|x^j|}{\gamma_j-(x^j)_1} \Big( |x^j| \xi(|x^j|) - |(x^j)^{\gamma}| \xi( |(x^j)^{\gamma}|) \Big)}_{:=II}\\
&+\underbrace{\frac{|x^j|}{\gamma-(x^j)_1} \Big( \eta(|x^j|,\theta^j) - \eta( |(x^j)^{\gamma}|,(\theta^j)^{\gamma}) \Big)}_{:=III}.
\end{split}
\end{equation}
Computing directly, 
\[
|x^j| (|x^j|-|(x^j)^{\gamma}|) = \frac{4 |x^j| \gamma ((x^j)_1 -\gamma)}{|x^j|+|(x^j)^{\gamma}|},
\]
which implies 
\begin{equation}\label{LimitFirstTerm}
\lim_{j\to \infty} I =-4L\gamma.
\end{equation}
Here we have used the fact that $|x^j|/|(x^j)^{\gamma}| \to 1$ as $j \to \infty$. About $II$, there is $\beta_j$ between $|x^j|$ and $|(x^j)^{\gamma}|$ such that 
\[
II=\frac{|x^j|}{\gamma-(x^j)_1} \Big( |x^j| \xi(|x^j|) - |(x^j)^{\gamma}| \xi( |(x^j)^{\gamma}|) \Big) =   \frac{-4\gamma |x^j|}{|x^j|+|(x^j)^{\gamma}|}\Big( \xi(\beta_j)+ \beta_j \xi'(\beta_j) \Big).
\]
For large $j$, it can be seen from Theorem \ref{ThmExpansion} that 
\[
|\beta_j||\xi'(\beta_j)|+ |\xi(\beta_j)|  \leqslant 
\begin{cases}
2M |\beta_j|^{-(1-\varpi)} \leqslant 2M|x^j|^{-(1-\varpi)}\quad&\text{if } q\geqslant 4,\\
2M |\beta_j|^{-(q-3)}  \leqslant 2M |x^j|^{-(q-3)}\quad&\text{if } 3<q<4.
\end{cases}
\]
Hence, 
\begin{equation}\label{LimitSecondTerm}
II\leqslant
\begin{cases}
O(|x^j|^{-(1-\varpi)}) \to 0 \quad&\text{ for } q\geqslant 4,\\
O(|x^j|^{-(q-3)}) \to 0 \quad&\text{ for } 3<q<4.
\end{cases}
\end{equation}
We now deal with the last term $III$ in \eqref{AsympExpansionU} by splitting $III$ into two terms $III_1$ and $III_2$ as follows,
\[
III_1 :=  \frac{|x^j|}{\gamma-(x^j)_1} \Big( \eta(|x^j|,\theta^j) - \eta( |x^j|,(\theta^j)^{\gamma}) \Big)
\]
and 
\[
III_2 := \frac{|x^j|}{\gamma-(x^j)_1} \Big( \eta(|x^j|,(\theta^j)^{\gamma}) - \eta( |(x^j)^{\gamma}|,(\theta^j)^{\gamma}) \Big).
\]
Using the mean value theorem, there exists $\beta_j$ between $|x^j|$ and $|(x^j)^{\gamma}|$ such that 
\[
III_2= \frac{|x^j|(|x^j|-|(x^j)^{\gamma}|)D_r \eta(\beta_j,(\theta^j)^{\gamma})}{\gamma-(x^j)_1},
\]
which implies
\begin{equation}\label{LimitThirdTerm2}
|III_2| = \Big| \frac{-4\gamma|x^j|D_r \eta(\beta_j,(\theta^j)^{\gamma})}{|x^j|+|(x^j)^{\gamma}|} \Big| \leqslant O(|x^j|^{-1}) \to 0 \text{ as } j\to \infty. 
\end{equation}
To estimate $III_1$, we suppose that 
\[
\lim_{j\to\infty} \frac{(x^j)^{\gamma}}{|(x^j)^{\gamma}|} = \widehat\theta
\]
and consider two cases $\overline \theta \neq \widehat \theta$ and $\overline  \theta = \widehat \theta$. If $\overline \theta \neq \widehat \theta$, we observe that 
\[
\frac{(\gamma-(x^j)_1,0,0)}{|x^j|} \to \frac12 (\widehat \theta -\overline \theta) \text{ as } j\to\infty.
\]
Hence 
\[
\frac{\gamma-(x_j)_1}{|x^j|} \to \frac12 (\widehat \theta -\overline \theta)_1 \text{ as } j\to\infty,
\]
which yields
\begin{equation}\label{LimitThirdTerm1Case1}
III_1 \to \frac{2 (V(\overline \theta)-V(\widehat \theta))}{(\widehat \theta -\overline \theta)_1}  = \frac{2(\overline \theta -\widehat \theta) \cdot x_0}{(\widehat \theta -\overline \theta)_1} = -2(x_0)_1.
\end{equation}
If $\overline \theta =\widehat \theta$, we obtain that there exists a point $\beta^j$ between $\theta^j$ and $(\theta^j)^{\gamma}$ on a geodesic on $\S^2$ such that
\[
\begin{split}
&\eta(|x^j|,\theta^j) - \eta( |x^j|,(\theta^j)^{\gamma} \\
&= \nabla_{\theta} \eta(|x^j|,\beta_j) ( \theta^j - (\theta^j)^{\gamma}) \\
&= \nabla_{\theta} \eta(|x^j|,\beta_j) \cdot \Big( \frac{x^j}{|x^j|} - \frac{(x^j)^{\gamma}}{|x^j|} \Big) + \nabla_{\theta} \eta(|x^j|,\beta^j) 
\cdot (x^j)^{\gamma} \Big( \frac1{|x^j|} -\frac1{|(x^j)^{\gamma}|} \Big)\\
&= \frac{-2(\gamma-(x^j)_1)}{|x^j|}  \Big( \partial_{\theta_1} \eta(|x^j|,\beta^j) + O \Big( \frac1{|x^j|} \Big)\Big).
\end{split}
\]
Consequently, 
\begin{equation}\label{LimitThirdTerm1Case2}
III_1 = -2 \Big( \partial_{\theta_1} \eta(|x^j|,\beta^j) + O \Big( \frac1{|x^j|} \Big)\Big) \to -2 \partial_{\theta_1} V(\widehat \theta)= -2 \partial_{\theta_1} V(\overline \theta) = -2(x_0)_1
\end{equation}
as $j\to\infty$. Plugging \eqref{LimitFirstTerm}, \eqref{LimitSecondTerm}, \eqref{LimitThirdTerm2}, \eqref{LimitThirdTerm1Case1} and \eqref{LimitThirdTerm1Case2} into \eqref{AsympExpansionU}, one gets our desired limit \eqref{LimitExpansionU}.

\noindent
\textbf{Part (2)}.
Suppose that there exists an unbounded sequence $x^j$ such that 
\[
\frac{\partial u}{\partial x_1}(x^j) < 0, \quad (x^j)_1 \geqslant \gamma_0+1\qquad \forall j \in \N.
\]
Thus, there exists a bounded sequence of positive numbers $\{a_j\}$ such that 
\[
u(x^j) > u(x_{a_j}), \quad x_{a_j} = x^j+ (2a_j,0,\dots,0) \qquad \forall j \in \N.
\]
Let define $\gamma_j =(x_j)_1+a_j > (x_j)_1$ to get that 
\begin{equation}\label{ContradictionExpansion}
\frac{|x^j|}{\gamma_j -(x_j)_1}  \Big( u(x^j)-u((x^j)^{\gamma_j}) \Big) >0.
\end{equation}
We split our next arguments in two cases. 

\textbf{Case 1.} $\liminf\limits_{j\to \infty} \gamma_j< \infty$.  Due to a passage to a subsequence, we can assume that $\gamma_j \to \gamma \geqslant \gamma_0+1$ as $j \to \infty$. Thus, \eqref{LimitExpansionU} implies that 
\[
\lim_{j\to\infty} \frac{|x^j|}{\gamma_j -(x^j)_1} \Big( u(x^j)-u((x^j)^{\gamma_j}) \Big) = -2L\gamma-2(x_0)_1 \leqslant -2L <0,
\]
a contradiction to \eqref{ContradictionExpansion}. 

\textbf{Case 2.} $\gamma = \infty$. We first observe that 
\[
\lim_{j\to\infty} \frac{|x^j|}{|(x^j)^{\gamma_j}|} =1
\]
and that $\gamma_j \leqslant |(x^j)^{\gamma_j}|$ since the definition of $\gamma_j$. Repeating the arguments above, we obtain that 
\begin{equation}\label{LimFirstTerm}
\frac{L|x^j|}{\gamma_j -(x^j)_1} \Big( |x^j|-|(x^j)^{\gamma}|  \Big) = -4L\gamma_j(1+o(1)),
\end{equation}
that
\begin{equation}\label{LimSecondTerm}
\frac{|x^j|}{\gamma_j-(x^j)_1} \Big( |x^j| \xi(|x^j|) - |(x^j)^{\gamma_j}| \xi( |(x^j)^{\gamma_j}|) \Big) =
\begin{cases}
O\Big( \frac{\gamma_j}{ |x^j|^{1-\varpi}} \Big) =O(\gamma_j^{\varpi}) \quad&\text{if } q\geqslant 4,\\
O\Big(\frac{\gamma_j}{|x^j|^{q-3}} \Big)=O(\gamma_j^{4-q}) \quad&\text{if } 3<q<4,
\end{cases}
\end{equation}
that
\begin{equation}\label{LimThirdTerm1}
  \frac{|x^j|}{\gamma_j-(x^j)_1} \Big( \eta(|x^j|,(\theta^j)^{\gamma_j}) - \eta( |(x^j)^{\gamma_j}|,(\theta^j)^{\gamma_j}) \Big) = O\Big( \frac{\gamma_j}{|x^j|} \Big)=O(1),
\end{equation}
and that
\begin{equation}\label{LimThirdTerm2}
\frac{|x^j|}{\gamma_j-(x^j)_1} \Big( \eta(|x^j|,(\theta^j)^{\gamma_j}) - \eta( |(x^j)^{\gamma_j}|,(\theta^j)^{\gamma_j}) \Big)= O\Big( \frac{\gamma_j}{|x^j|} \Big)=O(1).
\end{equation}
Combining \eqref{LimFirstTerm}, \eqref{LimSecondTerm}, \eqref{LimThirdTerm1} and \eqref{LimThirdTerm2} gives us 
\[
\frac{|x^j|}{\gamma_j-(x^j)_1} \Big( u(x^j)-u((x^j)^{\gamma_j}) \Big)= -4L\gamma_j(1+o(1))+ O(\gamma_j^{\varpi})+ O(1)+O(1) \to -\infty
\]
if $q\geqslant 4$ or 
\[
\frac{|x^j|}{\gamma_j-(x^j)_1} \Big( u(x^j)-u((x^j)^{\gamma_j}) \Big)= -4L\gamma_j(1+o(1))+ O(\gamma_j^{4-q})+ O(1)+O(1) \to -\infty
\]
if $3<q<4$. Those contradict \eqref{ContradictionExpansion} again. Thus, we conclude the proof of the lemma.
\end{proof}

\begin{proof}[\textbf{Proof of Theorem \ref{MainThm}.}]
We transform \eqref{MainEq} into the system of two second order elliptic equations
\begin{equation}\label{MainSystem}
\begin{cases}
-\Delta u= w \quad&\text{in }\R^3,\\
-\Delta w =-u^{-q} \quad&\text{in }\R^3.
\end{cases}
\end{equation}
Then, we establish the following lemma whose proof mimics that of  \cite[Lemma  4.2]{Troy81} and \cite[Lemma 5.3]{GWZ18}.
\begin{lemma}\label{LemTroy}
Let $\gamma \in \R$ and $(u,w)$ be a positive entire  solution of \eqref{MainSystem}. Suppose that 
\[
u(x) \leqslant u(x^{\gamma}), \quad u(x) \not\equiv u(x^{\gamma}) \quad\text{if } x_1<\gamma.
\]
Then, we claim that
\begin{equation}
u(x) < u(x^{\gamma}), \quad w(x) < w(x^{\gamma}) \quad\text{if } x_1<\gamma,
\end{equation}
and that
\begin{equation}
\frac{\partial u}{\partial x_1} >0, \quad \frac{\partial w}{\partial x_1} >0 \quad \text{on } \Gamma_{\gamma}.
\end{equation}
\end{lemma}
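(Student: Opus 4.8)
The plan is to run the standard Serrin–Troy moving-plane machinery for the cooperative elliptic system \eqref{MainSystem}, exploiting that the coupling is cooperative in the relevant region: increasing $u$ decreases $-u^{-q}$, which is exactly the sign that makes the maximum principle for systems applicable. Set $U(x) = u(x^\gamma) - u(x)$ and $V(x) = w(x^\gamma) - w(x)$ on the half-space $\Sigma_\gamma = \{x_1 < \gamma\}$. From the first equation of \eqref{MainSystem} and its reflected copy we get $-\Delta U = V$ in $\Sigma_\gamma$, and from the second equation, using the mean value theorem, $-\Delta V = u(x)^{-q} - u(x^\gamma)^{-q} = q\,\omega(x)^{-q-1} U$ for some $\omega(x)$ between $u(x)$ and $u(x^\gamma)$; since $u>0$ everywhere the coefficient $c(x) := q\,\omega(x)^{-q-1}$ is a positive, locally bounded function. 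Thus $(U,V)$ satisfies the cooperative system $-\Delta U = V$, $-\Delta V = c(x) U$ in $\Sigma_\gamma$, with $U \geq 0$, $U \not\equiv 0$ on $\Sigma_\gamma$ by hypothesis, and $U = V = 0$ on $\Gamma_\gamma$.

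First I would show $U > 0$ and $V > 0$ in the open half-space $\Sigma_\gamma$. The key observation is that the operator $\mathcal{L}(U,V) = (-\Delta U - V, -\Delta V - cU)$ is fully coupled and cooperative, so a version of the strong maximum principle for systems applies once we know $U \geq 0$; but $U \geq 0$ is not given — only $u(x) \leq u(x^\gamma)$ for $x_1 < \gamma$, i.e. $U \geq 0$ — good, that \emph{is} given. So with $U \geq 0$ we have $-\Delta V = c U \geq 0$, hence $V$ is superharmonic in $\Sigma_\gamma$; since $V \to 0$ at infinity (this needs the decay from Theorem \ref{ThmExpansion}, see below) and $V = 0$ on $\Gamma_\gamma$, the minimum principle gives $V \geq 0$, and then $V \not\equiv 0$ (else $U$ harmonic, vanishing on $\Gamma_\gamma$ and at infinity, forcing $U \equiv 0$, contradiction). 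Then $-\Delta U = V \geq 0$, $V \not\equiv 0$, so $U > 0$ strictly in $\Sigma_\gamma$ by the strong maximum principle, and feeding this back, $-\Delta V = cU > 0$ gives $V > 0$ strictly as well. Finally the boundary assertion $\partial u/\partial x_1 > 0$ and $\partial w/\partial x_1 > 0$ on $\Gamma_\gamma$ follows from the Hopf boundary point lemma applied to $U$ and $V$ at points of $\Gamma_\gamma$: since $U > 0$ in $\Sigma_\gamma$ and $U = 0$ on $\Gamma_\gamma$, the inward normal derivative of $U$ is strictly positive, and a direct computation shows $\partial_{x_1} U|_{\Gamma_\gamma} = -2\,\partial_{x_1} u|_{\Gamma_\gamma}$, whence $\partial_{x_1} u > 0$ on $\Gamma_\gamma$; similarly for $w$.

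The one delicate point — and the step I expect to be the main obstacle — is justifying that the maximum/minimum principles may be applied on the \emph{unbounded} domain $\Sigma_\gamma$. This is where the asymptotic expansions of Theorem \ref{ThmExpansion} enter: they guarantee that $u(x) = r(L + o(1))$ and $-\Delta u(x) = -\tfrac{1}{r}(2L + o(1))$, so in particular $w = -\Delta u \to 0$ and the differences $U, V$ are $o(1)$ as $|x| \to \infty$ uniformly. With this decay in hand one can either invoke the maximum principle for superharmonic functions on exterior/half-space domains directly (a superharmonic function that is $\geq 0$ on the boundary and $\to 0$ at infinity is $\geq 0$), or run the usual comparison argument against a small multiple of a fixed positive supersolution (e.g. a harmonic function decaying at infinity) on the region $\Sigma_\gamma \cap \{|x| < R\}$ and let $R \to \infty$; the boundary terms on $\{|x| = R\}$ vanish in the limit precisely because of the Theorem \ref{ThmExpansion} decay. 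Apart from this, the argument is routine bookkeeping; one also records that $c(x)$ is bounded on bounded sets, which is all the strong maximum principle and Hopf lemma require here.
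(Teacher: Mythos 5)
The proposal is correct and follows essentially the same route the paper intends, namely the standard Serrin–Troy maximum-principle argument for cooperative second-order systems; the paper itself only cites \cite[Lemma 4.2]{Troy81} and \cite[Lemma 5.3]{GWZ18} without spelling out the details. You correctly identify the one non-routine point: since the hypothesis gives only $U\geqslant 0$ (not $V\geqslant 0$), you must first deduce $V\geqslant 0$, and this step genuinely needs the decay $w=-\Delta u\to 0$ at infinity supplied by Theorem~\ref{ThmExpansion} so that the minimum principle for the superharmonic $V$ can be run on the unbounded half-space (alternatively: a negative interior infimum of $V$ would be attained, contradicting the strong minimum principle, because $V\to 0$ at infinity and on $\Gamma_\gamma$). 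After that, the bootstrap $V\geqslant 0\Rightarrow U>0\Rightarrow V>0$ via the strong maximum principle, and the Hopf boundary point lemma applied to $U$ and $V$ on $\Gamma_\gamma$ (with $\partial_{x_1}U=-2\,\partial_{x_1}u$ and $\partial_{x_1}V=-2\,\partial_{x_1}w$ there) gives the derivative inequalities exactly as you describe. Your remark that the coefficient $c(x)=q\,\omega(x)^{-q-1}$ is locally bounded (hence admissible for the strong maximum principle and Hopf lemma) because $u$ is positive and continuous is also the right observation.
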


Now we are in position to prove Theorem \ref{MainThm}. To demonstrate the sufficiency of Theorem \ref{MainThm}, we show that there exists $\gamma'>0$ such that 
\begin{equation}\label{AssumeUW}
u(x) <u(x^{\gamma}), \quad w(x)<w(x^{\gamma}) \quad \text{for } \gamma \geqslant \gamma' \text{ and } (x)_1<\gamma.
\end{equation}
We assume that \eqref{AssumeUW} is false. By Lemma \ref{LemTroy}, there exists two sequences $\{\gamma^j\}_{j\geqslant 1} \subset  \R$ and $\{x^j\}_{j\geqslant 1} \subset \R^3$ such that $\lim\limits_{j\to \infty} \gamma^j = \infty$, $(x^j)_1 < \gamma^j$ and 
\[
u(x^j) \geqslant u(y^j), \quad y^j=(x^j)^{\gamma^j} \quad \text{for all } j.
\]
Notice that $\lim\limits_{j\to \infty} |y^j| = \infty$, which implies $\lim\limits_{j\to\infty} u(y^j)=\infty$. Hence, $|x^j|$ tends to infinity. From Lemma \ref{LemLimitExpansionU}, we obtain 
\[
(x^j)_1 \leqslant \gamma_0+1 = -\frac{(x_0)_1}L+1 
\]
for large $j$.  Hence, for any $\beta > \gamma_0+1$, there holds
\[
u(x^j) \geqslant u(y^j) \geqslant u((x^j)^{\beta})\quad \text{for large } j 
\]
since $((x^j)^{\gamma^j})_1 > ((x^j)^{\beta})_1$ for large $j$ and $u(x)$ tends to 0 as $|x|$ tends to infinity. We thus use Lemma \ref{LemTroy} again to get that 
\[
0\leqslant \frac{|x^j|}{\beta-(x^j)_1} \Big( u(x^j)- u((x^j)^{\beta}) \Big) = -2L\beta - 2(x_0)_1 <0.
\]
This contradiction shows us \eqref{AssumeUW}. The rest of the sufficient part is followed by that of \cite[Theorem 1.1]{Zou95} and \cite[Theorem 1.1]{GHZ15}, then we omit the details here. 

Now, we consider the necessary part of the proof. As pointed out in \cite[Theorem 1.6]{Gue12}, any minimal radial entire solution $u(r)$ of \eqref{MainEq} as $q>3$ satisfies 
\[ r^{-1}u(r) -L= 
\begin{cases}
O(r^{-1}) &\quad \text{if } q>4,\\
O(r^{-1}\log r) &\quad \text{if } q=4,\\
O(r^{3-q}) &\quad \text{if } 3<q<4,
\end{cases}
\]
for some positive $L$. This implies 
\[
r^{-1}u(r) - L= o(r^{-\vartheta})
\]
for $r$ large and $\vartheta \in(0,1)$. Thus, we conclude the necessity and the proof of Theorem \ref{MainThm}.
\end{proof}

\section*{Acknowledgments}

The author would like to thank Qu\^oc Anh Ng\^o for his discussion on the preparation of this paper.

\end{document}